\def\cS{\mathcal{S}}
\def\mob{M\"{o}bius}
\def\ncss{$n$-cycle solution system}
\def\rot{rotation number}
\def\sew{symbol sequence}
\def\sL{{\sf L}}
\def\sR{{\sf R}}
\def\sw{switching manifold}
\def\tsp{terminating shrinking point}
\begin{document}

\newtheorem{theorem}{Theorem}
\newtheorem{corollary}[theorem]{Corollary}
\newtheorem{lemma}[theorem]{Lemma}
\theoremstyle{definition}
\newtheorem{definition}{Definition}

\title{Shrinking Point Bifurcations of Resonance
Tongues for Piecewise-Smooth, Continuous Maps}
\author{D.J.W.~Simpson and J.D.~Meiss\thanks{
D.~J.~W.~Simpson and J.~D.~Meiss gratefully
acknowledge support from NSF grant DMS-0707695.
}\\
Department of Applied Mathematics\\
University of Colorado\\
Boulder, CO 80309-0526}
\maketitle


\begin{abstract}
Resonance tongues are mode-locking regions of parameter space in which stable periodic solutions occur; they commonly occur, for example, near Neimark-Sacker bifurcations. For piecewise-smooth, continuous maps these tongues typically have a distinctive lens-chain (or sausage) shape in two-parameter bifurcation diagrams. We give a symbolic description of a class
of ``rotational" periodic solutions that display lens-chain structures for a general $N$-dimensional map. We then unfold the codimension-two, shrinking point bifurcation,
where the tongues have zero width. A number of codimension-one bifurcation curves emanate from shrinking points and we determine those that form tongue boundaries.
\end{abstract}

\section{Introduction}
\label{sec:INTRO}

Mathematical models often incorporate a nonsmooth component
in order to describe a physical discontinuity or sudden change.
Piecewise-smooth, continuous maps are a class of such systems.
We say that a map
\begin{equation}
x' = F(x) \;,
\label{eq:genMap}
\end{equation}
where $F : \mathbb{R}^N \to \mathbb{R}^N$,
is {\em piecewise-smooth continuous} if $F$ is everywhere continuous
and $\mathbb{R}^N$ can be partitioned into countably many regions where $F$ has
a different smooth functional form.
The map is nondifferentiable on codimension-one region
boundaries called {\em \sw s}.

Piecewise-smooth, continuous maps have been used as models in many areas,
for example, economics \cite{PuSu06,LaMo06},
power electronics \cite{ZhMo03,BaVe01,Ts03},
and cellular neural networks \cite{ChJu04}.
Furthermore, they arise as Poincar\'{e} maps of piecewise-smooth
systems of differential equations,
particularly near sliding bifurcations \cite{DiKo02,Ko05}
and near so-called corner collisions \cite{DiBu01c,OsDi08}.

Piecewise-smooth, continuous maps may exhibit
{\em border-collision bifurcations}
when a fixed point crosses a \sw~under
smooth variation of system parameters \cite{NuYo92,DiBu08,LeNi04,BaGr99}.
A fundamental characteristic of these bifurcations is that multipliers
associated with the fixed point may change discontinuously
at the bifurcation.
In some cases, border-collision bifurcations resemble bifurcations of smooth maps
(such as saddle-node and flip bifurcations); however, invariant sets created at the bifurcation generically exhibit a linear (not quadratic) growth in size
under variation of system parameters.
Alternatively, border-collision bifurcations may have no smooth analogue
(for example, an immediate transition from a fixed point to a
period-three cycle in a one-dimensional map).
Furthermore, these bifurcations may be extremely complicated,
producing multiple attractors and chaotic dynamics.

Resonance (or Arnold) tongues
are regions in parameter space within which there is an attracting periodic solution. In some cases (e.g., circle maps), the periodic solutions in a tongue can be characterized by their rotation number.
As was first observed for a one-dimensional, piecewise-linear,
circle map \cite{YaHa87},
two-parameter pictures of resonance tongues for piecewise-smooth,
continuous maps typically exhibit a distinctive lens-chain
(or sausage) geometry
\cite{SiMe08b,SuGa04,PuSu06,ZhMo06b,ZhMo08b},
as in Fig.~\ref{fig:res}.
The boundaries of a resonance tongue correspond either to a loss of stability
of the associated periodic solution or
to a collision of one point of the solution with the \sw.
The latter case usually corresponds to the collision of the stable periodic
solution with an unstable periodic solution of the same period
and is known as a border-collision fold bifurcation.
(This need not always occur, as is shown in 
\cite{SiMe08b}.)
Following \cite{YaHa87}, we call the codimension-two points
at which resonance tongues have zero width, {\em shrinking points}
(sometimes called ``waist points'').
The purpose of the present paper is to
explore the bifurcation that occurs near such points for a general $N$-dimensional map
in the neighborhood of a \sw.

\begin{figure}[ht]
\begin{center}
\setlength{\unitlength}{1cm}
{\includegraphics[width=15cm,height=8.4cm]{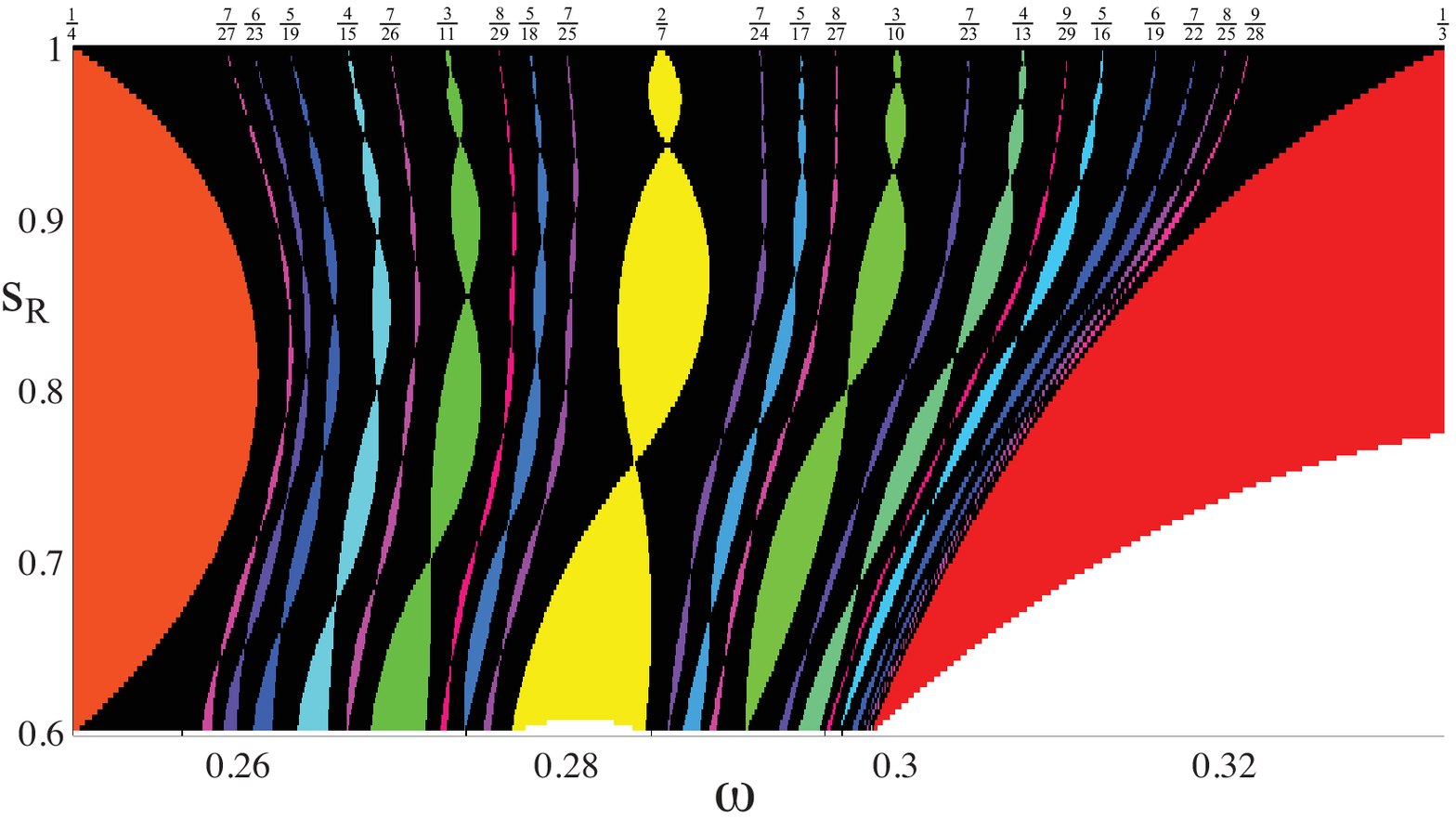}}      
\end{center}
{Figure 1: Resonance tongues corresponding to period-$n$ cycles
up to $n = 30$ of (\ref{eq:pwaMap}) with
$A_\sL = \begin{bmatrix}
			\frac65 \cos(2 \pi \omega) & 1 \\
			-\frac{9}{25} & 0
		 \end{bmatrix}$, 
$A_\sR =\protect{\begin{bmatrix}
			\frac{2}{s_\sR} \cos(2 \pi \omega) & 1 \\
			-\frac{1}{s_\sR^2} & 0
		\end{bmatrix}}$, 
$b = e_1$ and $\mu = 1$, as shown in \cite{SiMe08b}.
The rotation number associated with each resonance tongue is indicated.}
\label{fig:res}
\end{figure}
\addtocounter{figure}{1}

A \sw~locally divides phase space into two regions. Any nearby period-$n$ orbit with no points on the \sw~consists of, say, $l$ points on one side and $n-l$ points on the other side of the \sw. We observe that $l$ differs by one for coexisting stable and unstable periodic orbits in lens shaped resonance regions.  Moreover, the number $l$ also differs by one for stable periodic orbits in adjoining lenses. For example, Fig.~\ref{fig:shrink27set} shows a magnification of Fig.~\ref{fig:res} near a generic shrinking point in the $2/7$-tongue. The borders of the lens near such a shrinking point correspond to border-collision bifurcations of the two orbits in the lens, and thus to the occurrence of an orbit with a point on the \sw.

Consequently, a shrinking point is defined geometrically by the occurrence of a periodic solution that two points on the \sw. However, in \S\ref{sec:SHRINK} we show that there are two cases that must be treated separately: terminating and non-terminating shrinking points. Non-terminating shrinking points are defined algebraically, but by the singularity of certain matrices (the `border collision" matrices). Terminating shrinking points correspond to the case $l = 1$ or $n-1$; they are also defined algebraically by the occurrence of certain eigenvalues.  We will prove in  Lem.~\ref{le:shrinkPoly} (under certain nondegeneracy conditions, see  Defs.~\ref{def:shrink} and \ref{def:tershr}) the following.
\begin{itemize}
	\item The algebraic shrinking point conditions imply
	the geometrical condition of having two points on the \sw.
	\item At a shrinking point, the map has an invariant polygon 
	(that is typically nonplanar) on which the dynamics are conjugate 
	to a rigid rotation.
\end{itemize}

Since a shrinking point is defined by two conditions (two points on the \sw), the unfolding
of a shrinking point bifurcation requires the variation of two parameters. In the generic two-parameter picture, a (non-terminating) shrinking point appears, at first glance, to lie at the intersection of two smooth curves. However, this is an illusion: there are really four distinct curves corresponding to four different border-collision fold bifurcations. We will prove in Thm.~\ref{th:shrink} the following.
\begin{itemize}
	\item Each of the four border-collision curves is quadratically 
	tangent to one of the others and appears on only one side 
	of the shrinking point.
	\item The extension of these curves through the shrinking point 
	corresponds to virtual solutions. 
	\item At each border-collision fold bifurcation one particular 
	point of a particular period-$n$ cycle lies on the \sw. 
	\item There are curves in parameter space along which each point of this cycle
	lies on the \sw, see Fig.~\ref{fig:shrink27set}; however except for the four that form the resonance tongue boundaries, these correspond to virtual solutions. 

\end{itemize}


\begin{figure}[ht!]
\begin{center}
\setlength{\unitlength}{1cm}
\begin{picture}(12,10)
\put(0,0){\includegraphics[width=12cm,height=10cm]{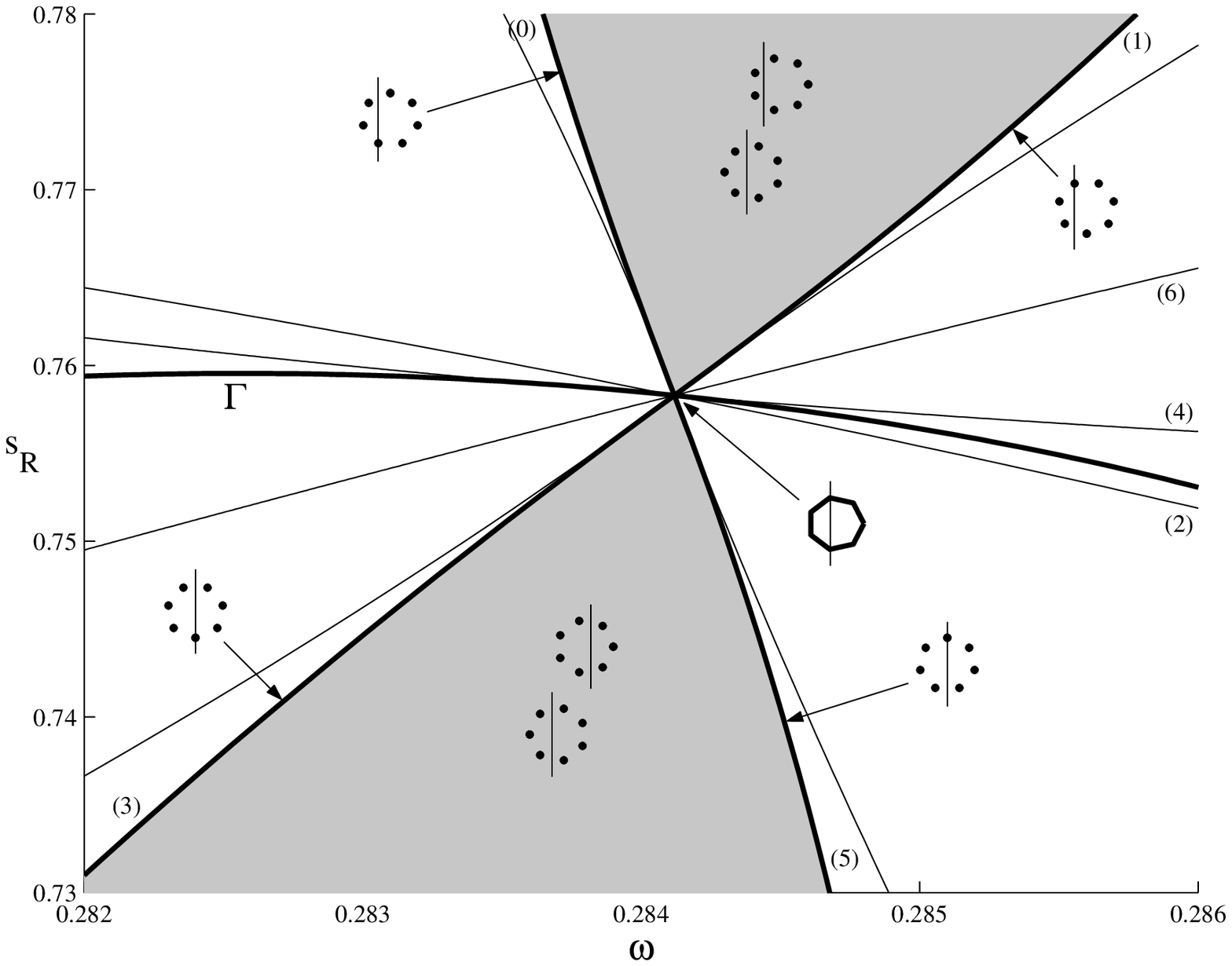}}
\end{picture}
\end{center}
\caption{\small{
A magnification of Fig.~\ref{fig:res} showing
the 2/7-resonance tongue near a shrinking point.
Sketches of four period-$7$ orbits are shown. The orbit with $l=3$ 
(its symbol sequence is ${\sf LLRRLRR}$, see \S\ref{sec:SYMDYN})
is admissible in both gray regions.  
In the upper region this orbit is stable; in the lower region it is a saddle.
Let $\{ x_i \}$ denote the points on this orbit. 
On the curves labeled $(i)$ the point $x_i$ lies on the \sw.
By Th.~\ref{th:shrink}, the curves $(0)$, $(1)$, $(3)$ and $(5)$ form tongue boundaries.
A saddle period-$7$ orbit with $l=2$ is
admissible in the upper region and coincides with $\{ x_i \}$ on
the boundaries (0) and (1). Similarly, a stable period-$7$ orbit with $l = 4$ is
admissible in the lower region and coincides with $\{ x_i \}$ on
the boundaries (3) and (5). At the shrinking point there is an invariant heptagon.
On the curve $\Gamma$ the linear solution system 
(\ref{eq:nCycleSolutionSystem}) is singular.}}
\label{fig:shrink27set}
\end{figure}

The remainder of this paper is organized as follows.
In \S\ref{sec:FRAME} we introduce the general map
that will be studied throughout this paper and 
determine fixed points.
Symbolic dynamics are introduced in \S\ref{sec:SYMDYN}
to describe periodic solutions.
The periodic orbit corresponding to a particular symbol sequence
may be found by solving a linear system, see \S\ref{sec:DESCR}.
In \S\ref{sec:ROTAT} we define the new concept of a
``rotational symbol sequence''
and using symbolic dynamics are able to classify 
periodic solutions as either rotational or non-rotational.
The motivation is that rotational periodic solutions
correspond to lens-chain resonance regions
whereas non-rotational periodic solutions may not \cite{SiMe08b}.
In \S\ref{sec:CARD} we discuss the number of distinct
rotational periodic solutions.
Formal definitions for terminating and non-terminating shrinking points
are provided in \S\ref{sec:SHRINK}.
This section describes the singular nature of shrinking points
and the construction of invariant, nonplanar, polygons.
This mostly preparatory work sets up
\S\ref{sec:UNFOLD} that gives the unfolding of the  shrinking point bifurcation
summarized by Thm.~\ref{th:shrink}.
Concluding comments are given in \S\ref{sec:CONC}.

\section{A framework for local analysis}
\label{sec:FRAME}

It is well-known (see for instance \cite{DiBu08})
that dynamical behavior of the piecewise-smooth, continuous map (\ref{eq:genMap})
local to a nondegenerate border-collision bifurcation of a fixed point
at a smooth point on a \sw~is described by a piecewise-affine map of the form
\begin{equation}
x' = f_\mu(x) =
\left\{ \begin{array}{lc}
f_\mu^{\sL}(x), & s \le 0 \\
f_\mu^{\sR}(x), & s \ge 0
\end{array} \right. \;,
\label{eq:pwaMap}
\end{equation}
where
\begin{equation}
f_\mu^{i}(x) = \mu b + A_i x \;,
\label{eq:fmui}
\end{equation}
for $i = \sL,\sR$.
Here
\begin{equation}
s = e_1^{\sf T} x \;,
\end{equation}
denotes the first component of $x \in \mathbb{R}^N$,
$\mu \in \mathbb{R}$ is a system parameter,
$b \in \mathbb{R}^N$,
and $A_\sL$ and $A_\sR$ are $N \times N$ matrices,
which, by continuity of (\ref{eq:pwaMap}),
must be identical in all but possibly their first columns.
Nonlinear terms only affect local dynamics
in degenerate cases (for example when $A_\sL$ has a multiplier
on the unit circle).
Also, for simplicity, we have assumed the \sw~is analytic
and therefore may be transformed to the plane, $s = 0$.

The map (\ref{eq:pwaMap})
is a homeomorphism if and only if $\det(A_\sL) \det(A_\sR) > 0$.
Furthermore, (\ref{eq:pwaMap}) has the scaling symmetry
\begin{equation}
f_{\lambda \mu}(\lambda x) = \lambda f_\mu(x),~~\forall \lambda > 0 \;,
\label{eq:muScaling}
\end{equation}
and consequently every bounded invariant set of (\ref{eq:pwaMap})
collapses to the origin as $\mu \to 0$.
Furthermore, it suffices to consider only $\mu = -1,0,1$.

Potential fixed points of (\ref{eq:pwaMap}) are obtained by solving
$x^{*(i)} = f_\mu^i(x^{*(i)})$.
Whenever $A_i$ does not have a multiplier of one,
$x^{*(i)}$ is unique and given by
\begin{equation}
x^{*(i)} = \mu (I - A_i)^{-1} b \;.
\label{eq:xStari}
\end{equation}
Each $x^{*(i)}$, as given by (\ref{eq:xStari}),
is a fixed point of (\ref{eq:pwaMap}) and said to be {\em admissible}
whenever $s^{*(i)}$ has the appropriate sign
(negative for $i = \sL$ and positive for $i = \sR$),
otherwise it is called {\em virtual}.
We have
\begin{equation}
s^{*(i)} = \frac{\mu}{\det(I-A_i)} e_1^{\sf T} {\rm adj}(I-A_i) b \;,
\nonumber
\end{equation}
where ${\rm adj}(X)$ denotes the {\em adjugate matrix} of a matrix $X$,
(${\rm adj}(X) X = \det(X) I$).
Since $A_\sL$ and $A_\sR$ differ in only possibly the first column,
their adjugates share the same first row,
which we denote by $\varrho^{\sf T}$:
\begin{equation}
\varrho^{\sf T} = e_1^{\sf T} {\rm adj}(I-A_\sL)
= e_1^{\sf T} {\rm adj}(I-A_\sR) \;.
\label{eq:varrho}
\end{equation}
Thus
\begin{equation}
s^{*(i)} = \frac{\mu \varrho^{\sf T} b}{\det(I-A_i)} \;.
\label{eq:xStari1}
\end{equation}
The condition $\varrho^{\sf T} b \ne 0$ is a nondegeneracy
condition for the border-collision bifurcation,
guaranteeing that the distance between each $x^{*(i)}$
and the \sw~varies linearly with the bifurcation parameter $\mu$.
Furthermore, if $(I-A_i)$ is nonsingular and
$a_i$ denotes the number of real multipliers of $A_i$
that are greater than one, then $\det(I-A_i)$ is positive
if and only if $a_i$ is even.
Thus, as described in \cite{DiFe99},
by (\ref{eq:xStari1}),
if $a_L + a_R$ is even, $x^{*(\sL)}$ and $x^{*(\sR)}$
are admissible for different signs of $\mu$,
thus as $\mu$ is varied through zero a single fixed point {\em persists}.
Conversely if $a_L + a_R$ is odd,
$x^{*(\sL)}$ and $x^{*(\sR)}$ coexist for the same sign of $\mu$.
At $\mu = 0$ the fixed points collide and annihilate in a {\em nonsmooth fold}
bifurcation.

\section{Symbolic dynamics}
\label{sec:SYMDYN}

We refer to any sequence, $\cS$, that has elements taken
from the alphabet, $\{ \sL, \sR \}$, as a \sew.
This paper focuses on periodic solutions; thus,
we will assume $\cS$ is finite and let $n$ denote the length of $\cS$.
We index the elements of $\cS$ from $0$ to $n-1$.
Arithmetic on the indices of $\cS$ is usually modulo $n$.
For clarity, throughout this paper we omit ``${\rm mod \,} n$'' where
it is clear modulo arithmetic is being used.

Here we introduce notation relating to \sew s.
Given $n \in \mathbb{N}$, the collection of all symbol sequences of
length $n$ is $\{ \sL, \sR \}^n \equiv \Sigma_2^n$.
The $i^{\rm th}$ left cyclic permutation is an operator,
$\sigma_i : \Sigma_2^n \to \Sigma_2^n$,
defined by $(\sigma_i \cS)_j = \cS_{i+j}$
and we use the notation, $\cS^{(i)} \equiv \sigma_i \cS$.
The $i^{\rm th}$ flip permutation is an operator,
$\chi_i : \Sigma_2^n \to \Sigma_2^n$,
that flips the $i^{\rm th}$ element of $\cS$
(i.e.~$\sL \to \sR$ and $\sR \to \sL$)
and leaves all other elements unchanged.
We use the notation, $\cS^{\overline{i}} \equiv \chi_i \cS$.
For example if
$\cS = {\sf LRLRR}$ then
$\cS^{\overline{3}} = {\sf LRLLR}$ and
$\cS^{(2)} = {\sf LRRLR}$.
In general
$\cS^{\overline{i}(j)} \equiv (\cS^{\overline{i}})^{(j)}
\ne (\cS^{(j)})^{\overline{i}} \equiv \cS^{(j)\overline{i}}$
because with the same example
$\cS^{\overline{3}(2)} = {\sf LLRLR}$ and
$\cS^{(2)\overline{3}} = {\sf LRRRR}$.

The $i^{\rm th}$ multiplication permutation is an operator,
$\pi_i : \Sigma_2^n \to \Sigma_2^n$,
defined by $(\pi_i \cS)_j = \cS_{ij}$.
Notice $(\pi_i \pi_j \cS)_k = (\pi_j \cS)_{ik}
= \cS_{ijk} = (\pi_{ij} \cS)_k$ thus
\begin{equation}
\pi_i \pi_j = \pi_{ij} \;.
\label{eq:permMultiply}
\end{equation}
Consequently $\pi_i$ is an invertible operator if and only if
${\rm gcd}(i,n) = 1$ and the inverse of $\pi_i$ is $\pi_{i^{-1}}$
(where $i^{-1}$ is the multiplicative inverse of $i$ modulo $n$).
Also notice $(\sigma_i \pi_j \cS)_k = (\pi_j \cS)_{i+k}
= \cS_{ij+jk} = (\sigma_{ij} \cS)_{jk} = (\pi_j \sigma_{ij} \cS)_k$.
Hence
\begin{equation}
\sigma_i \pi_j = \pi_j \sigma_{ij} \;.
\label{eq:permReverse}
\end{equation}

We let $\cS \hat{\cS}$ denote the concatenation of the \sew s
$\cS$ and $\hat{\cS}$ and let $\cS^k \in \Sigma_2^{kn}$
denote the \sew~formed by the concatenation of $k$ copies of $\cS$.
A \sew~is called {\em primitive} if it cannot be written
as a power, $\cS^k$, for any $k > 1$.
$\cS$ is primitive if and only if $\cS \ne \cS^{(i)}$ for all $i \ne 0$.

\section{Describing and locating periodic solutions}
\label{sec:DESCR}

Each orbit of (\ref{eq:pwaMap}) can be coded by a \sew~that gives
its itinerary relative to the \sw.
However, instead of defining symbol sequences for orbits,
we find it preferable to do the reverse.
Given a point $x = x_0 \in \mathbb{R}^N$,
we denote $x_i$ as the $i^{\rm th}$ iterate of $x$
under the maps $f_\mu^{\sL}$ and $f_\mu^{\sR}$ in the order determined by
$\cS \in \Sigma_2^n$:
\begin{equation}
x_{i+1} = f_\mu^{\cS_i}(x_i)
\label{eq:altMap}
\end{equation}
In general this is different from iterating $x$ under the map (\ref{eq:pwaMap}).
However, if the sequence $\{ x_i \}$
satisfies the {\em admissibility condition}:
\begin{equation}
\cS_i = \left\{
\begin{array}{lc}
\sL, & {\rm whenever~} s_i < 0 \\
\sR, & {\rm whenever~} s_i > 0
\end{array}
\right.
\label{eq:admissCond}
\end{equation}
for every $i$, then $\{ x_i \}$
coincides with the forward orbit of $x$ under (\ref{eq:pwaMap}).
When (\ref{eq:admissCond}) holds for every $i$,
$\{ x_i \}$ is admissible,
otherwise it is virtual.

For a given \sew~$\cS$,
we are interested in finding $x_0 \in \mathbb{R}^N$
such that $x_0 = x_n$,
because then $\{ x_0, x_1, \ldots, x_{n-1} \}$
is a period-$n$ orbit.
We call this orbit an {\em $\cS$-cycle}.
$\cS$-cycles are determined by the linear system
\begin{eqnarray*}
x_1 & = & A_{\cS_0} x_0 + \mu b \;, \\
x_2 & = & A_{\cS_1} x_1 + \mu b \;, \\
& \vdots & \\
x_0 & = & A_{\cS_{n-1}} x_{n-1} + \mu b  \;.
\end{eqnarray*}
Elimination of the points $x_1,\ldots,x_{n-1}$, gives
\begin{equation}
(I - M_\cS) x_0 = \mu P_\cS b \;.
\label{eq:nCycleSolutionSystem}
\end{equation}
where
\begin{eqnarray}
M_\cS & = & A_{\cS_{n-1}} \ldots A_{\cS_0} \;, \label{eq:stabMatrix} \\
P_\cS & = &
I + A_{\cS_{n-1}} + A_{\cS_{n-1}} A_{\cS_{n-2}} + \cdots
+ A_{\cS_{n-1}} \ldots A_{\cS_1}  \;. \label{eq:bcMatrix}
\end{eqnarray}
We call (\ref{eq:nCycleSolutionSystem}) the {\em \ncss} of $\mathcal{S}$.
If ($I -M_\cS$) is nonsingular, then
(\ref{eq:nCycleSolutionSystem}) has the unique solution
\begin{equation}
x_0 = \mu (I - M_\cS)^{-1} P_\cS b \;.
\label{eq:nCycleSolution}
\end{equation}
Stability of the period-$n$ orbit is determined by $M_\cS$
and for this reason we call $M_\cS$ the {\em stability matrix} of $\cS$.
In view of Lem.~\ref{le:bc} (see below),
we call $P_\cS$ the {\em border-collision matrix} of $\cS$.
Notice $P_\cS$ is independent of $\cS_0$, thus
\begin{equation}
P_\cS = P_{\cS^{\overline{0}}} \;.
\label{eq:PindepS0}
\end{equation}
Also, it is easily verified that $M_\cS$ and $M_{\cS^{\overline{0}}}$
differ in only their first column.
Consequently, the map that describes the $n^{\rm th}$ iterate of $x_0$
under either $\cS$ or $\cS^{\overline{0}}$:
\begin{equation}
x_n = \left\{ \begin{array}{lc}
\mu P_\cS b + M_\cS x, & s \le 0 \\
\mu P_\cS b + M_{\cS^{\overline{0}}} x, & s \ge 0
\end{array} \right. \;,
\label{eq:nthPWA}
\end{equation}
is piecewise-smooth continuous
and has the same form as (\ref{eq:pwaMap}).
For this reason (\ref{eq:pwaMap}) may be used to investigate
dynamical behavior local to border-collision bifurcations of
periodic solutions.

We now state five fundamental lemmas relating to \ncss s
that we will utilize in \S\ref{sec:SHRINK} and \S\ref{sec:UNFOLD}.
Lems.~\ref{le:bc} and \ref{le:sn}
are generalizations of those given in \cite{SiMe08b}.

\begin{lemma} 
Suppose $x$ solves the \ncss~(\ref{eq:nCycleSolutionSystem})
of $\cS$ and $s_i = e_1^{\sf T} x_i = 0$.
Then $x$ also solves the \ncss~of $\cS^{\overline{i}}$.
\label{le:solvesAlso}
\end{lemma}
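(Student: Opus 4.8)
The plan is to exploit the single structural fact that $A_\sL$ and $A_\sR$ differ only in their first columns, so that $A_\sL y = A_\sR y$ whenever $e_1^{\sf T} y = 0$; equivalently, $f_\mu^\sL(y) = f_\mu^\sR(y)$ for every $y$ on the \sw. First I would unwind what ``$x$ solves the \ncss~of $\cS$'' means at the level of the point sequence: writing $x_0 = x$ and $x_{j+1} = f_\mu^{\cS_j}(x_j) = \mu b + A_{\cS_j} x_j$ for $j = 0,\dots,n-1$, equation (\ref{eq:nCycleSolutionSystem}) is, after eliminating $x_1,\dots,x_{n-1}$, precisely the condition $x_n = x_0$. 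So the hypothesis gives us an actual period-$n$ point sequence $\{x_0,\dots,x_{n-1}\}$ generated according to the symbol sequence $\cS$ (admissible or not is irrelevant here).

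Next I would use the hypothesis $s_i = e_1^{\sf T} x_i = 0$. At the single step $i \mapsto i+1$ we have $x_{i+1} = f_\mu^{\cS_i}(x_i)$, and since $x_i$ lies on the \sw, $f_\mu^{\cS_i}(x_i) = f_\mu^{\overline{\cS_i}}(x_i)$ by the structural fact above. Hence the \emph{same} point $x_{i+1}$ is produced whether we apply $\cS_i$ or its flip. Every other step $j \mapsto j+1$ with $j \ne i$ uses the symbol $\cS_j = (\cS^{\overline{i}})_j$, which is unchanged. Therefore the point sequence $\{x_j\}$ is generated according to $\cS^{\overline{i}}$ exactly as it was according to $\cS$, and in particular it still closes up: $x_n = x_0$. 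Re-eliminating the intermediate points, this says exactly that $x = x_0$ satisfies $(I - M_{\cS^{\overline{i}}}) x = \mu P_{\cS^{\overline{i}}} b$, i.e.\ $x$ solves the \ncss~of $\cS^{\overline{i}}$.

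There is essentially no substantive obstacle: the argument is a one-line observation about the \sw~together with the dictionary between (\ref{eq:nCycleSolutionSystem}) and the recursion (\ref{eq:altMap}). The only points requiring a little care are (i) making explicit that ``solving the \ncss'' is equivalent to $x_n = x_0$ for the sequence built from (\ref{eq:altMap}), so that the argument can be carried out at the level of points rather than by manipulating $M_\cS$ and $P_\cS$ directly, and (ii) keeping the index $i$ (and all arithmetic) modulo $n$, so that the ``step $i \mapsto i+1$'' also covers the wrap-around case $i = n-1$. Alternatively, one can give a purely algebraic version: $M_\cS$ and $M_{\cS^{\overline{i}}}$ (resp.\ $P_\cS$ and $P_{\cS^{\overline{i}}}$) differ only through the factor $A_{\cS_i}$ versus $A_{\overline{\cS_i}}$ sitting in a fixed position of the product, and $(A_{\cS_i} - A_{\overline{\cS_i}})$ has all columns zero except the first, which is killed when it acts on the vector $x_i$ with $e_1^{\sf T} x_i = 0$; bookkeeping then shows $(I - M_{\cS^{\overline{i}}})x - \mu P_{\cS^{\overline{i}}} b = (I - M_\cS)x - \mu P_\cS b = 0$. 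I would present the point-sequence version as the main proof since it is shorter and more transparent.
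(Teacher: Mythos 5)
Your proposal is correct and follows essentially the same route as the paper: the paper's proof is the one-line observation that continuity (equivalently, $A_\sL$ and $A_\sR$ differing only in their first columns) gives $A_\sL x_i = A_\sR x_i$ when $s_i = 0$, so the $i^{\rm th}$ symbol imposes no restriction. You simply spell out the dictionary between the \ncss~and the point recursion (\ref{eq:altMap}) in more detail, which is fine but not a different argument.
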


\begin{proof}
By continuity:
$A_\sL x_i = A_\sR x_i$, hence there is no restriction on
the $i^{\rm th}$ element of $\cS$.
\hfill
\end{proof}

\begin{lemma} 
Suppose $x$ and $\hat{x}$ solve the \ncss s of
$\cS$ and $\cS^{\overline{0}}$ respectively.
Then $\det(I-M_\cS) s = \det(I-M_{\cS^{\overline{0}}}) \hat{s}$.
\label{le:detPair}
\end{lemma}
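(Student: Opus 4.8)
The plan is to use the explicit solution formula \eqref{eq:nCycleSolution} together with Cramer's rule applied to the first coordinate. Since $x$ solves \eqref{eq:nCycleSolutionSystem} for $\cS$, assuming $(I-M_\cS)$ is nonsingular we have $x = \mu (I-M_\cS)^{-1} P_\cS b$, and hence
\begin{equation*}
\det(I-M_\cS)\, s = \det(I-M_\cS)\, e_1^{\sf T} x = \mu\, e_1^{\sf T} {\rm adj}(I-M_\cS)\, P_\cS b \;.
\end{equation*}
Similarly $\det(I-M_{\cS^{\overline{0}}})\, \hat{s} = \mu\, e_1^{\sf T} {\rm adj}(I-M_{\cS^{\overline{0}}})\, P_{\cS^{\overline{0}}} b$. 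By \eqref{eq:PindepS0} we have $P_\cS = P_{\cS^{\overline{0}}}$, so the claim reduces to showing that $M_\cS$ and $M_{\cS^{\overline{0}}}$ have the same first row in their adjugates, i.e.
\begin{equation*}
e_1^{\sf T} {\rm adj}(I-M_\cS) = e_1^{\sf T} {\rm adj}(I-M_{\cS^{\overline{0}}}) \;.
\end{equation*}

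The key observation is the one already recorded in the text: $M_\cS$ and $M_{\cS^{\overline{0}}}$ differ only in their first column. This is because $M_\cS = A_{\cS_{n-1}}\cdots A_{\cS_1} A_{\cS_0}$ and $M_{\cS^{\overline{0}}} = A_{\cS_{n-1}}\cdots A_{\cS_1} A_{\overline{\cS_0}}$; the matrices $A_\sL$ and $A_\sR$ agree except possibly in their first columns, so $A_{\cS_0}$ and $A_{\overline{\cS_0}}$ differ only in the first column, and left-multiplication by the common factor $A_{\cS_{n-1}}\cdots A_{\cS_1}$ preserves the property of differing only in the first column. Consequently $I-M_\cS$ and $I-M_{\cS^{\overline{0}}}$ differ only in their first column. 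Now the first row of the adjugate of a matrix $X$ consists of the cofactors $C_{j1}$, which are (signed) minors obtained by deleting row $j$ and \emph{column $1$} of $X$; these minors do not involve the first column of $X$ at all. Hence the first rows of ${\rm adj}(I-M_\cS)$ and ${\rm adj}(I-M_{\cS^{\overline{0}}})$ coincide, which is exactly what we need. This mirrors the argument already used in \eqref{eq:varrho} for the fixed-point case.

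Combining the two displays gives $\det(I-M_\cS)\, s = \mu\, \varrho_\cS^{\sf T} P_\cS b = \det(I-M_{\cS^{\overline{0}}})\, \hat{s}$, where $\varrho_\cS^{\sf T}$ denotes the common first row of the two adjugates. I do not anticipate a serious obstacle; the only point requiring a little care is the degenerate case in which $(I-M_\cS)$ or $(I-M_{\cS^{\overline{0}}})$ is singular, so that \eqref{eq:nCycleSolution} does not apply. In that situation one should argue directly: write both sides as polynomial identities. Since $e_1^{\sf T} {\rm adj}(I-M_\cS)(I-M_\cS) = \det(I-M_\cS)\, e_1^{\sf T}$, applying this to $x$ and using $(I-M_\cS) x = \mu P_\cS b$ gives $\det(I-M_\cS)\, s = \mu\, e_1^{\sf T}{\rm adj}(I-M_\cS) P_\cS b$ with no invertibility assumed; the analogous identity holds for $\cS^{\overline{0}}$, and the adjugate-first-row equality together with \eqref{eq:PindepS0} finishes the proof in all cases.
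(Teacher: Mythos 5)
Your argument is correct and is essentially the paper's own proof: both rest on $P_\cS = P_{\cS^{\overline{0}}}$, the fact that $I-M_\cS$ and $I-M_{\cS^{\overline{0}}}$ differ only in their first column so their adjugates share a first row, and left-multiplication by that common row (the paper equates $(I-M_\cS)x=(I-M_{\cS^{\overline{0}}})\hat{x}$ first and multiplies once, which is just a cosmetic reordering of your computation). Your closing remark is the right formulation — the identity $e_1^{\sf T}{\rm adj}(X)\,X=\det(X)\,e_1^{\sf T}$ needs no invertibility, so the initial detour through \eqref{eq:nCycleSolution} is unnecessary.
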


\begin{proof}
By (\ref{eq:nCycleSolutionSystem}) and (\ref{eq:PindepS0}), we have
$(I-M_\cS) x = (I-M_{\cS^{\overline{0}}}) \hat{x}$.
Since $(I-M_\cS)$ and $(I-M_{\cS^{\overline{0}}})$
are identical except in the first column,
the first row of their adjugates are identical.
Multiplication on the left by this row to the previous equation
yields the desired result.
\hfill
\end{proof}

\begin{lemma} 
For any $i$, $\det(I-M_{\cS^{(i)}}) = \det(I-M_\cS)$.
\label{le:cyclicDetM}
\end{lemma}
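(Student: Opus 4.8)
The plan is to recognize $M_{\cS^{(i)}}$ as a cyclic rearrangement of the matrix product that defines $M_\cS$ and then to apply the elementary identity $\det(I-XY)=\det(I-YX)$, valid for any square matrices $X,Y$. Split the product in (\ref{eq:stabMatrix}) at the $i^{\rm th}$ position by setting $C=A_{\cS_{i-1}}\cdots A_{\cS_1}A_{\cS_0}$ and $D=A_{\cS_{n-1}}\cdots A_{\cS_{i+1}}A_{\cS_i}$, so that $M_\cS=DC$. Reading off the definition $(\sigma_i\cS)_j=\cS_{i+j}$ and reducing all indices modulo $n$, one checks that $M_{\cS^{(i)}}=A_{\cS_{i+n-1}}\cdots A_{\cS_{i+1}}A_{\cS_i}$ is precisely the same two blocks in the reversed order, i.e.\ $M_{\cS^{(i)}}=CD$. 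Hence $\det(I-M_\cS)=\det(I-DC)=\det(I-CD)=\det(I-M_{\cS^{(i)}})$, which is the claim.

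The commutation identity $\det(I-DC)=\det(I-CD)$ is standard; it follows, for example, from a Schur-complement evaluation (computed in two ways) of the block matrix whose diagonal blocks are $I,I$ and whose off-diagonal blocks are $C,D$, or from Sylvester's determinant theorem. What matters for the present setting is that this version of the identity requires no invertibility hypothesis on $C$ or $D$, which is essential here because the individual matrices $A_\sL,A_\sR$ — and hence $C,D$ — need not be invertible.

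There is essentially no genuine obstacle in this argument. The only point requiring a moment's care is the index bookkeeping in the middle step: one must verify that, after reduction mod $n$, the descending list of subscripts $i+n-1,i+n-2,\dots,i$ appearing in $M_{\cS^{(i)}}$ is exactly the subscripts of $C$ (namely $i-1,i-2,\dots,0$) followed by those of $D$ (namely $n-1,n-2,\dots,i$), which is what legitimizes writing $M_{\cS^{(i)}}=CD$. The degenerate cases $i\equiv 0$ (where $C=I$ and the statement is trivial) and $i\equiv 1$ cause no difficulty.
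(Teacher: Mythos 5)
Your proof is correct, and it takes a genuinely different route from the paper. You split $M_\cS = DC$ at the $i^{\rm th}$ position and invoke the identity $\det(I-DC)=\det(I-CD)$ (Sylvester/Schur-complement), which holds with no invertibility hypothesis and handles every $i$ in a single step; your index check that $M_{\cS^{(i)}}=CD$ is the only bookkeeping needed and it is done correctly. The paper instead argues by conjugation: for $i=1$, if $A_{\cS_0}$ is invertible then $I-M_{\cS^{(1)}} = A_{\cS_0}(I-M_\cS)A_{\cS_0}^{-1}$, so the determinants agree; the singular case is then recovered by density of nonsingular matrices together with continuity of the determinant, and the general $i$ follows by iterating the one-step shift. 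The trade-off is that the paper's argument is shorter to state but needs the density-and-continuity patch to cover singular $A_\sL$ or $A_\sR$ and proceeds one cyclic shift at a time, whereas your argument is purely algebraic, avoids any limiting argument, and treats an arbitrary shift $i$ at once — which is arguably the cleaner proof, at the modest cost of citing (or proving) the $\det(I-XY)=\det(I-YX)$ identity.
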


\begin{proof}
Suppose w.l.o.g., $\mathcal{S}_0 = \sL$.
If $A_\sL$ is nonsingular, then
$(I-M_{\cS^{(1)}}) = A_\sL (I-M_\cS) A_\sL^{-1}$
which verifies the result for $i = 1$.
Since nonsingular matrices are dense in the set of all matrices
and the determinant of a matrix is a continuous function of its elements,
the result for $i = 1$ is also true even when $A_\sL$ is singular.
Repetition of this argument completes the result for any $i$.
\hfill
\end{proof}

\begin{lemma} 
Suppose $(I-M_\cS)$ is nonsingular, $\mu \ne 0$ and $\varrho^{\sf T} b \ne 0$.
Then the point $x_0$,
given by (\ref{eq:nCycleSolution}),
lies on the switching manifold if and only if $P_\cS$ is singular.
\label{le:bc}
\end{lemma}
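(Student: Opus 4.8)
The plan is to work directly from the explicit solution formula~(\ref{eq:nCycleSolution}). Since $(I-M_\cS)$ is nonsingular we have $x_0 = \mu (I-M_\cS)^{-1} P_\cS b$, and the point lies on the switching manifold precisely when $s_0 = e_1^{\sf T} x_0 = 0$. Using the adjugate identity $(I-M_\cS)^{-1} = \frac{1}{\det(I-M_\cS)} \,{\rm adj}(I-M_\cS)$, this becomes the scalar condition
\begin{equation*}
\frac{\mu}{\det(I-M_\cS)} \; e_1^{\sf T} \,{\rm adj}(I-M_\cS)\, P_\cS b = 0 \;.
\end{equation*}
Because $\mu \ne 0$ and $\det(I-M_\cS) \ne 0$, the point is on the switching manifold if and only if $e_1^{\sf T} \,{\rm adj}(I-M_\cS)\, P_\cS b = 0$. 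So the task reduces to showing this quantity vanishes if and only if $P_\cS$ is singular, under the hypothesis $\varrho^{\sf T} b \ne 0$.

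The key algebraic step I would aim for is an identity relating $\,{\rm adj}(I-M_\cS)\, P_\cS$ to $P_\cS$ itself (up to a scalar), or more precisely an identity of the form $e_1^{\sf T}\,{\rm adj}(I-M_\cS)\, P_\cS = c\, \varrho^{\sf T}$ or $= c\, e_1^{\sf T}$ for some scalar $c$ proportional to $\det P_\cS$. The natural route: observe that $(I - M_\cS)$ and $(I - M_{\cS^{\overline 0}})$ differ only in their first column (as noted after~(\ref{eq:nCycleSolution})), and that $P_\cS = P_{\cS^{\overline 0}}$ by~(\ref{eq:PindepS0}). The map~(\ref{eq:nthPWA}) has exactly the form~(\ref{eq:pwaMap}) with parameter $\mu$, matrices $M_\cS, M_{\cS^{\overline 0}}$, vector $P_\cS b$, and the constant in front of the switching function is $1$ rather than $\mu$. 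Then the fixed-point analysis of \S\ref{sec:FRAME} applies verbatim: the "$s$-coordinate" of the fixed point $x_0$ of~(\ref{eq:nthPWA}) is, by the exact analogue of~(\ref{eq:xStari1}), equal to $\dfrac{\tilde\varrho^{\sf T} P_\cS b}{\det(I - M_\cS)}$, where $\tilde\varrho^{\sf T} = e_1^{\sf T}\,{\rm adj}(I-M_\cS) = e_1^{\sf T}\,{\rm adj}(I-M_{\cS^{\overline 0}})$ is the common first row of the two adjugates.

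So it remains to show $\tilde\varrho^{\sf T} P_\cS b = 0 \iff P_\cS$ singular, given $\varrho^{\sf T} b \ne 0$. One direction is immediate: if $P_\cS$ is singular then $\det P_\cS = 0$, and I expect $\tilde\varrho^{\sf T} P_\cS$ to be a scalar multiple of $\det P_\cS$ times a fixed covector — concretely I would try to prove $\tilde\varrho^{\sf T} P_\cS = (\det P_\cS)\, w^{\sf T}$ for a suitable $w$, perhaps by relating both $M_\cS$ and $P_\cS$ through the telescoping structure $I - M_\cS = (I - A_{\cS_{n-1}}\cdots A_{\cS_0})$ and $P_\cS = I + A_{\cS_{n-1}} + \cdots$, so that $(I-A_{\cS_0})$-type factors connect them. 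For the converse, the nondegeneracy $\varrho^{\sf T} b \ne 0$ is what guarantees $w^{\sf T} b \ne 0$ (the covector $w$ should turn out to be $\varrho$ up to something nonzero, since $\varrho^{\sf T} = e_1^{\sf T}{\rm adj}(I-A_i)$ plays the analogous role for single iterates), so that $\tilde\varrho^{\sf T} P_\cS b = (\det P_\cS)\, w^{\sf T} b = 0$ forces $\det P_\cS = 0$.

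The main obstacle I anticipate is establishing the identity $\tilde\varrho^{\sf T} P_\cS = (\det P_\cS)\, \varrho^{\sf T}$ (or its precise correct form). This is a statement purely about the structured matrices $M_\cS$ and $P_\cS$ built from $A_\sL, A_\sR$ differing in one column, and proving it cleanly may require either an inductive telescoping argument on $n$, or a clever use of the matrix determinant lemma / adjugate manipulations exploiting that $P_\cS$ is "almost" the resolvent-like sum $\sum_{k} A_{\cS_{n-1}}\cdots A_{\cS_{n-k}}$. Once that identity is in hand, the lemma follows in two lines from $\mu \ne 0$, $\det(I-M_\cS)\ne 0$, and $\varrho^{\sf T} b \ne 0$.
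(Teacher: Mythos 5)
Your reduction of the lemma to the scalar condition $e_1^{\sf T}\,{\rm adj}(I-M_\cS)\,P_\cS\, b = 0$ is correct, but everything after that rests on the conjectured identity $e_1^{\sf T}\,{\rm adj}(I-M_\cS)\,P_\cS = \det(P_\cS)\,\varrho^{\sf T}$ (or some variant of it), which you do not prove --- you explicitly flag it as the main obstacle and only list possible strategies (induction, telescoping, the matrix determinant lemma) without carrying any of them out. In your formulation that identity \emph{is} the lemma: without it neither implication follows. In particular the direction you call ``immediate'' ($P_\cS$ singular $\Rightarrow$ $x_0$ on the switching manifold) is not immediate, because $b$ need not lie in ${\rm null}(P_\cS)$, and for unstructured matrices $M$ and $P$ with $P$ singular one can certainly have $e_1^{\sf T}(I-M)^{-1}P b \ne 0$; the special structure of $A_\sL$ and $A_\sR$ differing only in their first columns must enter somewhere, and in your write-up it enters only through the unproven identity. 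So, while the identity does appear to be true and your route could in principle be completed, as written there is a genuine gap exactly at the step that carries the content of the lemma.

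For comparison, the paper sidesteps any adjugate identity with a constructive argument. It first shows at least one of $(I-A_\sL)$, $(I-A_\sR)$ is nonsingular (otherwise $\varrho^{\sf T} M_\cS = \varrho^{\sf T}$, contradicting nonsingularity of $I-M_\cS$); then, for an arbitrary $\hat b$, it writes $\hat b = k b + c$ with $k = \varrho^{\sf T}\hat b / \varrho^{\sf T} b$ and $\varrho^{\sf T} c = 0$, and sets $y = \mu (I-A_\sL)^{-1} c$. The point $y$ has zero first component, so by continuity $A_\sL y = A_\sR y$, whence $y = \mu (I-M_\cS)^{-1} P_\cS c$ and therefore $\mu\, e_1^{\sf T}(I-M_\cS)^{-1} P_\cS \hat b = k\, s_0$ for every $\hat b$. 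Choosing $\hat b \ne 0$ in ${\rm null}(P_\cS)$ (which forces $\varrho^{\sf T}\hat b \ne 0$, so $k \ne 0$) gives $s_0 = 0$; conversely, $s_0 = 0$ makes the nonzero row vector $e_1^{\sf T}(I-M_\cS)^{-1}$ a left null vector of $P_\cS$. Note that this computation is precisely a proof of (the relevant part of) your identity, since it shows $\mu\, e_1^{\sf T}(I-M_\cS)^{-1} P_\cS = \frac{s_0}{\varrho^{\sf T} b}\,\varrho^{\sf T}$, i.e.\ that the covector you need is proportional to $\varrho^{\sf T}$; so the step you postponed is not a technicality to be filled in later --- it is where the proof actually lives, and some argument of this kind (exploiting the one-column difference and the switching-manifold continuity) is needed to close it.
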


\begin{lemma} 
Suppose $P_\cS$ is nonsingular, $\mu \ne 0$ and $\varrho^{\sf T} b \ne 0$.
Then the \ncss~(\ref{eq:nCycleSolutionSystem})
has a solution if and only if $(I-M_\cS)$ is nonsingular.
\label{le:sn}
\end{lemma}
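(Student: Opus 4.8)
The plan is to handle the two implications separately. If $(I-M_\cS)$ is nonsingular then the \ncss~(\ref{eq:nCycleSolutionSystem}) has the unique solution (\ref{eq:nCycleSolution}), so that direction is free. For the converse I would argue the contrapositive: assuming $(I-M_\cS)$ is \emph{singular} while $P_\cS$ is nonsingular, $\mu\ne0$ and $\varrho^{\sf T}b\ne0$, I must show (\ref{eq:nCycleSolutionSystem}) has no solution. By the Fredholm alternative it is enough to exhibit a nonzero row vector $v^{\sf T}$ with $v^{\sf T}(I-M_\cS)=0$ but $v^{\sf T}(\mu P_\cS b)\ne0$. The candidate is $v^{\sf T}=\varrho^{\sf T}P_\cS^{-1}$: it is well defined and nonzero because $P_\cS$ is invertible and $\varrho\ne0$ (forced by $\varrho^{\sf T}b\ne0$), and it already satisfies $v^{\sf T}(\mu P_\cS b)=\mu\,\varrho^{\sf T}b\ne0$. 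Thus the entire proof reduces to verifying that $\varrho^{\sf T}P_\cS^{-1}(I-M_\cS)=0$ whenever $\det(I-M_\cS)=0$.

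The key step is the decomposition $I-M_\cS=P_\cS\,(I-A_{\cS_0})+u\,e_1^{\sf T}$ for some $u\in\mathbb{R}^N$. This follows by telescoping $M_\cS=A_{\cS_{n-1}}\cdots A_{\cS_0}$ into $I-M_\cS=\sum_{j=0}^{n-1}(A_{\cS_{n-1}}\cdots A_{\cS_{j+1}})(I-A_{\cS_j})$, writing $A_{\cS_j}=A_{\cS_0}+(A_{\cS_j}-A_{\cS_0})$, and using that $A_\sL$ and $A_\sR$ differ only in their first column, so each $A_{\cS_j}-A_{\cS_0}$ is a (possibly zero) column times $e_1^{\sf T}$: the leading terms sum to $\big(\sum_{j}A_{\cS_{n-1}}\cdots A_{\cS_{j+1}}\big)(I-A_{\cS_0})=P_\cS(I-A_{\cS_0})$ by (\ref{eq:bcMatrix}), and the rest sums to $u\,e_1^{\sf T}$. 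Now two things fall out. First, left-multiplying the decomposition by $\varrho^{\sf T}P_\cS^{-1}$ and using $\varrho^{\sf T}(I-A_{\cS_0})=\det(I-A_{\cS_0})\,e_1^{\sf T}$ — valid since $\varrho^{\sf T}=e_1^{\sf T}{\rm adj}(I-A_{\cS_0})$ by (\ref{eq:varrho}) and ${\rm adj}(X)X=\det(X)I$ — gives $\varrho^{\sf T}P_\cS^{-1}(I-M_\cS)=\beta\,e_1^{\sf T}$, where $\beta:=\det(I-A_{\cS_0})+\varrho^{\sf T}P_\cS^{-1}u$ is a scalar. Second, rewriting the decomposition as $I-M_\cS=P_\cS\big[(I-A_{\cS_0})+(P_\cS^{-1}u)e_1^{\sf T}\big]$, taking determinants, and expanding the bracketed determinant along its first column (it agrees with $I-A_{\cS_0}$ in every other column) yields $\det(I-M_\cS)=\det(P_\cS)\,\beta$. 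Since $P_\cS$ is nonsingular, $\det(I-M_\cS)=0$ forces $\beta=0$, hence $\varrho^{\sf T}P_\cS^{-1}(I-M_\cS)=0$; and then, were $x$ a solution of (\ref{eq:nCycleSolutionSystem}), we would get $0=\varrho^{\sf T}P_\cS^{-1}(I-M_\cS)x=\varrho^{\sf T}P_\cS^{-1}(\mu P_\cS b)=\mu\,\varrho^{\sf T}b\ne0$, a contradiction.

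I expect the main obstacle to be the decomposition $I-M_\cS=P_\cS(I-A_{\cS_0})+u\,e_1^{\sf T}$ — the telescoping bookkeeping, and the insight that $\varrho^{\sf T}P_\cS^{-1}$ is precisely the covector to pair with the system — after which the two adjugate/determinant identities and the Fredholm step are routine. I expect this to mirror the proof of Lem.~\ref{le:bc}, whose natural companion identity is $e_1^{\sf T}{\rm adj}(I-M_\cS)\,P_\cS=\det(P_\cS)\,\varrho^{\sf T}$; indeed $\varrho^{\sf T}P_\cS^{-1}(I-M_\cS)=0$ also follows from that identity by post-multiplying by $P_\cS^{-1}$ and then reading off the first row of ${\rm adj}(I-M_\cS)(I-M_\cS)=\det(I-M_\cS)I$.
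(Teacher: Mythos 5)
Your proposal is correct, and it proves the hard direction by a genuinely different route than the paper. You work purely algebraically: the telescoping decomposition $I-M_\cS = P_\cS(I-A_{\cS_0}) + u\,e_1^{\sf T}$ (valid because $A_\sL$ and $A_\sR$ differ only in their first columns), combined with the adjugate identity $\varrho^{\sf T}(I-A_{\cS_0}) = \det(I-A_{\cS_0})\,e_1^{\sf T}$ and a first-column expansion, gives both $\varrho^{\sf T}P_\cS^{-1}(I-M_\cS) = \beta\,e_1^{\sf T}$ and $\det(I-M_\cS)=\det(P_\cS)\,\beta$ with $\beta = \det(I-A_{\cS_0}) + \varrho^{\sf T}P_\cS^{-1}u$; singularity of $(I-M_\cS)$ then forces $\beta=0$, so $\varrho^{\sf T}P_\cS^{-1}$ annihilates the left side of (\ref{eq:nCycleSolutionSystem}) while pairing with the right side to $\mu\varrho^{\sf T}b \ne 0$ --- I checked the telescoping sum, the rank-one form of $A_{\cS_j}-A_{\cS_0}$, and the determinant expansion, and all are sound. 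The paper argues geometrically instead: for each $c$ in the hyperplane $K={\rm Span}(\varrho)^{\perp}$ it constructs a point $x$ with $e_1^{\sf T}x=0$ solving $(I-A_\sL)x=\mu c$ (which requires a separate analysis of the nullspace and range of $I-A_\sL$ when that matrix is singular), uses continuity to get $x=\mu P_\cS c + M_\cS x$, and concludes ${\rm range}(I-M_\cS)=P_\cS K$, which cannot contain $\mu P_\cS b$ since $b\notin K$. Your version is shorter, avoids the case split on singularity of $I-A_\sL$, and yields the explicit factorization of $\det(I-M_\cS)$ as a byproduct; the paper's version buys the structural fact that at such a singularity the range of $(I-M_\cS)$ is exactly $P_\cS\,{\rm Span}(\varrho)^{\perp}$. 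Your closing aside is also correct: the identity $e_1^{\sf T}{\rm adj}(I-M_\cS)P_\cS=\det(P_\cS)\,\varrho^{\sf T}$ follows from your computation wherever both matrices are invertible and hence holds identically by polynomial continuity.
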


Proofs for Lems.~\ref{le:bc} and \ref{le:sn}
are given in appendix \ref{sec:PROOFSa}.
An interpretation of these last two lemmas is presented in Fig.~\ref{fig:MPgrid}.
The situation $\det(I-M_\cS) = \det(P_\cS) = 0$ is generically codimension-two.
When appropriate nondegeneracy conditions are satisfied it is
equivalent to a shrinking point, see \S\ref{sec:SHRINK}.

\begin{table}[tbp]
\begin{center}
\begin{tabular}{c|c|c|}
			& $\det(I-M_\cS) \ne 0$ & $\det(I-M_\cS) = 0$ \\
\hline
			& unique solution  &  \\
\raisebox{.75em}[0pt]{$\det(P_\cS) \ne 0$}
			& and $s_0 \ne 0$  & \raisebox{.75em}[0pt]{no solution}\\
\hline
			& unique solution  & possibly uncountably  \\
\raisebox{.75em}[0pt]{$\det(P_\cS) = 0$}
			& and $s_0 = 0$    & many solutions \\
\hline
\end{tabular}
\caption{A grid summarizing the nature of solutions to (\ref{eq:nCycleSolutionSystem})
when $\mu \ne 0$ and $\varrho^{\sf T} b \ne 0$
as determined by Lems.~\ref{le:bc} and \ref{le:sn}.
$M_\cS$ is the stability matrix of $\cS$, (\ref{eq:stabMatrix}),
and $P_\cS$ is the border-collision matrix of $\cS$, (\ref{eq:bcMatrix}).}
\label{fig:MPgrid}
\end{center}
\end{table}

\section{Rotational symbol sequences}
\label{sec:ROTAT}

Of particular interest is the situation that the map (\ref{eq:pwaMap})
exhibits an invariant, topological circle
that crosses the switching manifold at two points.
When the restriction of the map to this circle is homeomorphic to
a monotone increasing circle map we find that
periodic solutions on the circle can only have certain \sew s.
We call these sequences {\em rotational \sew s}.

\begin{definition}[Rotational Symbol Sequence]
Let $l,m,n \in \mathbb{N}$, with $l,m < n$ and ${\rm gcd}(m,n) = 1$.
Let $\cS = \cS[l,m,n]$ be the \sew~of length $n$ defined by
\begin{equation}
\cS_{id} = \left\{ \begin{array}{lc} \sL, & i = 0,\ldots,l-1 \\
\sR, & i = l,\ldots,n-1
\end{array} \right.
\label{eq:rotSSdef}
\end{equation}
where $d$ is the multiplicative inverse of $m$ modulo $n$,
i.e.~$dm = 1 {\rm ~mod~} n$.
Then we say $\cS$ and any cyclic permutation of $\cS$ is a
{\em rotational \sew}.
\end{definition}

Notice $d$ always exists and is unique because
$m/n$ is an irreducible fraction \cite{Ga98}.
For example if $(l,m,n) = (3,2,7)$ then $d = 4$,
hence $\cS_0 = \cS_4 = \cS_{8 {\rm \; mod \;} 7=1} = \sL$,
thus $\cS[3,2,7] = {\sf LLRRLRR}$.

A pictorial method for computing $\cS$ in terms of $l$, $m$ and $n$
is to select $n$ points on a circle, then draw a vertical line
through the circle
such that $l$ points lie to the left of the line,
see Fig.~\ref{fig:rotSymSeq}.
Label the first point to the left of the lower intersection
of the circle and line, point 0.
Move $m$ points clockwise from 0 and label this point 1.
Continue stepping clockwise labeling every $m^{\rm th}$ point
with a number that is one greater than the previous number
until all points are labeled.
Then $\cS_i = \sL$ if the point $i$
lies to the left of the vertical line and $\sR$ otherwise.

\begin{figure}[ht]
\begin{center}
\includegraphics[width=5cm,height=6cm]{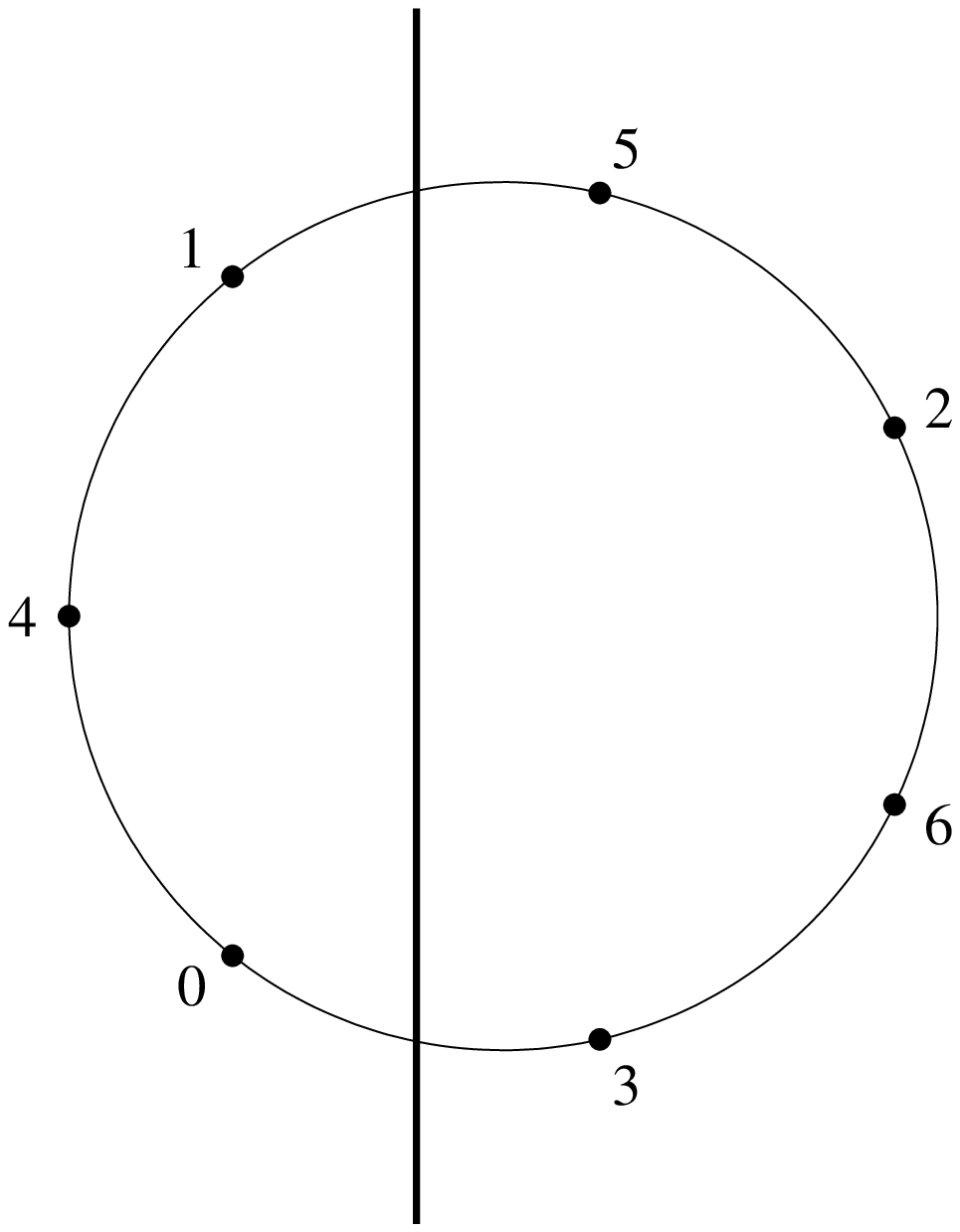}
\caption{
Illustration of the pictorial method for
determining $\cS[3,2,7] = {\sf LLRRLRR}$.
\label{fig:rotSymSeq}
}
\end{center}
\end{figure}

The circle represents an invariant circle of (\ref{eq:pwaMap})
and the vertical line represents the \sw.
Taking $m$ steps clockwise corresponds to
evaluating the map (\ref{eq:pwaMap}) once.
Thus $\cS[l,m,n]$ is the \sew~of an $n$-cycle of (\ref{eq:pwaMap})
that has $l$ points to the left of the \sw~and with {\em \rot}~$m/n$.

The following lemma states some basic properties of
rotational \sew s.

\begin{lemma}
Suppose $\cS[l,m,n]$ is a rotational symbol sequence.

\begin{enumerate}[label=\alph{*}),ref=\alph{*}]
\item
\label{it:constructS}
$\cS[l,m,n] = \pi_m \cS[l,1,n]$.
\item
\label{it:permDiffS}
If $\cS[l,m,n]^{(i)}_0 = \sL$ and $\cS[l,m,n]^{(i)}_{-d} = \sR$ then $i = 0$.
\item
\label{it:primitiveS}
$\cS[l,m,n]$ is primitive.
\item
\label{it:cycPerS}
$\cS[l,m,n]^{((l-1)d)} = \cS[l,n-m,n]$.
\item
\label{it:switchS}
$\cS[l,m,n]^{((l-1)d)\overline{0}} = \cS[l,m,n]^{\overline{0}(ld)}$.
\item
\label{it:notCycPerS}
If $0 < m_1, m_2 < \frac{n}{2}$ are distinct integers coprime to $n$
and $l \ne 1,n-1$,
then $\cS[l,m_1,n]$ is not a cyclic permutation of $\cS[l,m_2,n]$.
\end{enumerate}
\label{le:oss}
\end{lemma}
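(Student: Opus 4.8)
My plan is to work throughout in the ``$d$-coordinates'' on $\mathbb{Z}/n$, i.e.\ to index positions by multiples of $d$, the inverse of $m$ modulo $n$ (which exists since $\gcd(m,n)=1$). In these coordinates the defining relation~(\ref{eq:rotSSdef}) says exactly that $\cS[l,m,n]$ carries $\sL$ at the positions $0,d,2d,\ldots,(l-1)d$ and $\sR$ at all others; equivalently, $\cS[l,m,n]$ has an $\sL$ at position $p$ iff $mp\bmod n\in\{0,\ldots,l-1\}$. Part~(a) is then immediate: $(\pi_m\cS[l,1,n])_p=\cS[l,1,n]_{mp}$, and $\cS[l,1,n]$ has an $\sL$ at $q$ iff $q\in\{0,\ldots,l-1\}$. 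For part~(b), write the given index as $i\equiv kd\pmod n$; the two hypotheses become $k\bmod n\in\{0,\ldots,l-1\}$ and $(k-1)\bmod n\notin\{0,\ldots,l-1\}$, which are incompatible unless $k=0$ (if $1\le k\le l-1$ then $k-1\in\{0,\ldots,l-2\}$), whence $i=0$. Part~(c) follows from~(b), since $\cS[l,m,n]_0=\sL$ while $\cS[l,m,n]_{-d}=\cS[l,m,n]_{(n-1)d}=\sR$ (as $n-1\notin\{0,\ldots,l-1\}$ because $l<n$); thus any relation $\cS[l,m,n]^{(i)}=\cS[l,m,n]$ would force $i=0$ by~(b), which is precisely what primitivity requires.

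For part~(d) I would use $(n-m)^{-1}\equiv -d\pmod n$, so that $\cS[l,n-m,n]$ has $\sL$ at $0,-d,-2d,\ldots,-(l-1)d$; comparing this with $\cS[l,m,n]^{((l-1)d)}$, whose value at position $qd$ is $\cS[l,m,n]_{(l-1+q)d}$, one checks both are $\sL$ exactly when $q\bmod n\in\{-(l-1),\ldots,0\}$. For part~(e) I would first record, still in $d$-coordinates, two elementary facts about the flip operator: changing position $(l-1)d$ of $\cS[l,m,n]$ from $\sL$ to $\sR$ yields exactly $\cS[l-1,m,n]$, while changing position $0$ from $\sL$ to $\sR$ yields $\cS[l-1,m,n]^{(-d)}$. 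Together with the readily verified commutation $\chi_0\sigma_i=\sigma_i\chi_i$, these collapse both sides of~(e) to $\cS[l-1,m,n]^{((l-1)d)}$; alternatively one can simply evaluate both sides of~(e) position by position. (The degenerate case $l=1$, read with the convention $\cS[0,\cdot,n]=\sR^n$, is trivial.)

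The substantive part is~(f), which I expect to be the main obstacle, and the strategy is to reduce it to a statement about a single rotational sequence. Using~(a), the commutation relation~(\ref{eq:permReverse}), and invertibility of $\pi_{m_1}$ (valid since $\gcd(m_1,n)=1$), the existence of an $i$ with $\cS[l,m_1,n]^{(i)}=\cS[l,m_2,n]$ is equivalent to $\cS[l,k,n]$ being a cyclic permutation of $\cS[l,1,n]=\sL^l\sR^{n-l}$, where $k\equiv m_1^{-1}m_2\pmod n$. Now a cyclic permutation of $\sL^l\sR^{n-l}$ is exactly a sequence whose $\sL$'s occupy a single contiguous arc, and since $1\le l\le n-1$ forces both symbols to be present, this holds iff the sequence has exactly one $\sL\to\sR$ transition. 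I would count these transitions for $\cS[l,k,n]$: with $B=\{0,\ldots,l-1\}$, the position $p$ carries $\sL$ in $\cS[l,k,n]$ iff $kp\bmod n\in B$, so after the substitution $u=kp$ the number of $\sL\to\sR$ transitions is $\#\{u\in B: u+k\bmod n\notin B\}=l-|B\cap(B-k)|$; an elementary evaluation of the intersection of the two length-$l$ cyclic arcs $B$ and $B-k$ then shows this count equals $1$ only when $k\equiv 1$ or $k\equiv -1\pmod n$ --- and here is exactly where the hypotheses $l\ne 1$ and $l\ne n-1$ are needed, to rule out the otherwise-possible transition counts $l$ and $n-l$. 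Finally, $k\equiv 1$ gives $m_1\equiv m_2$, hence $m_1=m_2$ as both lie in $(0,n)$; while $k\equiv -1$ gives $m_1\equiv n-m_2$, impossible since $m_1,m_2\in(0,n/2)$. Thus distinct admissible $m_1,m_2$ yield sequences that are not cyclic permutations of each other. The only step demanding genuine care is the arc-intersection count together with the case analysis isolating $k\equiv\pm 1$; everything else is a routine unwinding of the definition in $d$-coordinates.
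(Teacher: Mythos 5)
Your proposal is correct in all six parts, and parts (a)--(d) run essentially along the paper's lines (the same change of index $i=kd$ for (b), primitivity from (b) for (c), and the inverse $-d$ of $n-m$ for (d)). The genuine differences are in (e) and (f). For (e), the paper simply compares the $\sL$-positions of both sides directly, whereas you factor the computation through the identities $\chi_{(l-1)d}\,\cS[l,m,n]=\cS[l-1,m,n]$ and $\chi_0\,\cS[l,m,n]=\cS[l-1,m,n]^{(-d)}$ together with the commutation $\chi_0\sigma_i=\sigma_i\chi_i$, collapsing both sides to $\cS[l-1,m,n]^{((l-1)d)}$; this is a tidy structural argument (and your convention for $l=1$ handles the degenerate case the paper does not comment on). For (f), both you and the paper perform the same reduction by $\pi_{m_1}^{-1}$ to a comparison of $\cS[l,k,n]$, $k\equiv m_1^{-1}m_2$, with $\sL^l\sR^{n-l}$, but the discriminating invariants differ: the paper exhibits two explicit indices at which the ``step-$\hat d$'' property (all but one $\sL$ is followed, $\hat d$ steps later, by an $\sL$) fails for $\sL^l\sR^{n-l}$, while you count adjacent $\sL\to\sR$ transitions of $\cS[l,k,n]$ via the arc intersection $|B\cap(B-k)|$, obtaining the count $\min(k,n-k,l,n-l)$ and hence the sharper characterization that $\cS[l,k,n]$ is a rotation of $\sL^l\sR^{n-l}$ exactly when $k\equiv\pm1\pmod n$ (given $l\ne1,n-1$). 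Your route requires the short case analysis of the arc overlap, which you correctly flag and which does pin down precisely where $l\ne1,n-1$ enters; the paper's route avoids that case analysis at the cost of a slightly less transparent invariant. Both are complete and correct; the conclusions $k\equiv1\Rightarrow m_1=m_2$ and $k\equiv-1\Rightarrow m_1+m_2=n$ match the paper's exclusion of $\hat m=1,n-1$.
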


\begin{proof} 
Recall the notation $\sigma_i \cS = \cS^{(i)}$, for the $i^{th}$ left cyclic permutation and $\chi_i \cS = \cS^{\bar i}$ for the flip of the $i^{th}$ element.

\begin{enumerate}[label=\alph{*}),ref=\alpha{*}]
\item
Since $dm = 1$,
\begin{equation}
(\pi_m \cS[l,1,n])_{id} = \cS[l,1,n]_i = \sL
{\rm ~~if~and~only~if~~}i = 0,\ldots,l-1 \;,
\nonumber
\end{equation}
matching the definition of $\cS[l,m,n]$, (\ref{eq:rotSSdef}).
\item
Let $j = im$ (equivalently, $i = jd$).
Then $\cS[l,m,n]^{(i)}_0 = \cS[l,m,n]_{jd} = \sL$
implies $0 \le j \le l-1$.
Similarly, $\cS[l,m,n]^{(i)}_{-d} = \cS[l,m,n]_{(j-1)d} = \sR$
implies $l \le j-1 \le n-1$.
The only value of $j$ that satisfies both inequalities is zero, hence $i = 0$.
\item
From part (b), $\cS[l,m,n]$ differs from any nontrivial cyclic permutation of itself
in either the $0^{\rm th}$ or the $(-d)^{\rm th}$ element.
Therefore, $\cS[l,m,n]$ is primitive.
\item
By definition,
\begin{eqnarray}
\cS[l,m,n]_{id} = \sL & {\rm if~and~only~if} & i = 0,\ldots,l-1 \;, \label{eq:cSb1} \\
{\rm and~~~} \cS[l,n-m,n]_{i(n-d)} = \sL & {\rm if~and~only~if} & i = 0,\ldots,l-1 \;, \label{eq:cSb2}
\end{eqnarray}
since the multiplicative inverse of $(n-m)$ is $(n-d)$.
From (\ref{eq:cSb1}), by letting $j = i - l + 1$ we obtain
\begin{eqnarray}
\cS[l,m,n]_{jd}^{((l-1)d)} = \sL &
{\rm if~and~only~if} & j = -l+1,\ldots,0 \;, \label{eq:cSb3} \\
{\rm thus~~~} \cS[l,m,n]_{j(n-d)}^{((l-1)d)} = \sL &
{\rm if~and~only~if} & j = 0,\ldots,l-1 \;, \nonumber
\end{eqnarray}
which matches (\ref{eq:cSb2}).
\item
From (\ref{eq:cSb3}) we obtain
\begin{equation}
\cS[l,m,n]_{id}^{((l-1)d)\overline{0}} = \sL
{\rm ~~~if~and~only~if~~~} i = -l+1,\ldots,-1 \;. \label{eq:cSb4}
\end{equation}
Also
\begin{eqnarray}
\cS[l,m,n]_{id}^{\overline{0}} = \sL
& {\rm if~and~only~if} & i = 1,\ldots,l-1 \;, \nonumber \\
{\rm thus~~~} \cS[l,m,n]_{jd}^{\overline{0}(ld)} = \sL &
{\rm if~and~only~if} & j = -l+1,\ldots,-1 \;,
\label{eq:cSb5}
\end{eqnarray}
where we have set $j = i-l$. Equation (\ref{eq:cSb5}) matches (\ref{eq:cSb4})
proving the result.
\item
Let $d_1$ and $d_2$ denote the multiplicative inverses
of $m_1$ and $m_2$ modulo $n$, respectively.
Let
\begin{eqnarray}
\check{\cS} & = & \pi_{d_1} \cS[l,m_1,n] \;, \\
\hat{\cS} & = & \pi_{d_1} (\cS[l,m_2,n]^{(k)}) \;,
\end{eqnarray}
where $k \in \mathbb{Z}$.
Since $\pi_{d_1}$ is an invertible operator,
it remains to show that $\check{\cS} \ne \hat{\cS}$
for any $k \in \mathbb{Z}$.

Using (\ref{eq:permMultiply}) and part (a) we find
\begin{equation}
\check{\cS} = \cS[l,1,n] \label{eq:cScheck2} \;.
\end{equation}
Re-expressing $\hat{\cS}$ in the form we desire
is a little more complicated but requires
no more that the basic known properties concerning
multiplicative permutations, $\pi$.
\begin{eqnarray}
\hat{\cS} & = & (\pi_{d_1} \cS[l,m_2,n])^{(k m_1)} \;,
{\rm ~~by~} (\ref{eq:permReverse}) \nonumber \\
& = & (\pi_{d_1,n} \pi_{m_2,n} \cS[l,1,n])^{(k m_1)} \;,
{\rm ~~by~part~(a)} \nonumber \\
& = & (\pi_{\hat{m},n} \cS[l,1,n])^{(k m_1)} \;,
{\rm ~~where~} \hat{m} = d_1 m_2 \;,
{\rm ~by~} (\ref{eq:permMultiply}) \nonumber \\
& = & \cS[l,\hat{m},n]^{(k m_1)} \;,
{\rm ~~by~part~(a)} \label{eq:cShat2}.
\end{eqnarray}
Notice $\hat{m} \ne 1$ since $m_1 \ne m_2$.
Also $\hat{m} \ne n-1$ since, otherwise,
$d_1 m_2 = -1 \Rightarrow (n-d_1) m_2 = 1 \Rightarrow m_2 = n-m_1$
(since the multiplicative inverse of $(n-d_1)$ is $(n-m_1)$),
thus $m_1 + m_2 = n$ which is a contradiction
since $m_1,m_2 < \frac{n}{2}$ by assumption.

Using (\ref{eq:cShat2}), by the definition of an rotational \sew~(\ref{eq:rotSSdef}),
if $\hat{\cS}_i = \sL$,
then $\hat{\cS}_{i+\hat{d}} = \sL$
for all but one value of $i \in \{ 0,\ldots,n-1 \}$
(where $\hat{d} \hat{m} = 1$).
We now show this property of $\hat{\cS}$ is not exhibited by $\check{\cS}$
and hence $\check{\cS}$ and $\hat{\cS}$ cannot be equal.

Using (\ref{eq:cScheck2}) and remembering $\hat{d},l \ne 1,n-1$,
if $\hat{d} \le l$, then
$\check{\cS}_{l-\hat{d}} = \check{\cS}_{l-\hat{d}+1} = \sL$ and
$\check{\cS}_l = \check{\cS}_{l+1} = \sR$.
Similarly if $\hat{d} > l$, we have
$\check{\cS}_0 = \check{\cS}_1 = \sL$ and
$\check{\cS}_{\hat{d}} = \check{\cS}_{\hat{d}+1} = \sR$.
In either case
\begin{equation}
\check{\cS} \ne \hat{\cS}
\end{equation}
for any $k \in \mathbb{Z}$.
\hfill

\end{enumerate}
\end{proof}

\section{The cardinality of symbol sequences}
\label{sec:CARD}

Let $\mathcal{N}_n$ [$\mathcal{N}_n^{\rm rot}$]
denote the number of primitive \sew s [primitive rotational \sew s] 
of length $n$ that are distinct up to cyclic permutation.
To use combinatorics terminology, $\mathcal{N}_n$
is the number of $n$-bead necklaces of two colors
with primitive period $n$, and it is the number of binary
{\em Lyndon words} of length $n$ \cite{Lo83,HaZh98}.
The formulas we give for $\mathcal{N}_n$ and $\mathcal{N}_n^{\rm rot}$
use the \mob~function:
\begin{equation}
\mu(n) = \left\{ \begin{array}{ll}
1 & {\rm if~} n=1 {\rm ~or~} n {\rm ~is~the~product~of~an~even~number~of~primes} \\
-1 & {\rm if~} n {\rm ~is~the~product~of~an~odd~number~of~primes} \\
0 & {\rm otherwise,~i.e.~} n {\rm ~is~not~square~free}
\end{array} \right.
\end{equation}
and Euler's totient function:
\begin{equation}
\varphi(n) = {\rm ~the~number~of~positive~integers~less~than~} n
{\rm ~and~coprime~to~} n
\end{equation}
where $\varphi(1) = 1$. The first few values of $\mu(n)$ and $\varphi(n)$ are:
\begin{center}
\begin{tabular}{c|cccccccccccc}
n & 1 & 2 & 3 & 4 & 5 & 6 & 7 & 8 & 9 & 10 & 11 & 12 \\ \hline
$\mu(n)$ & 1 & -1 & -1 & 0 & -1 & 1 & -1 & 0 & 0 & 1 & -1 & 0 \\ \hline
$\varphi(n)$ & 1 & 1 & 2 & 2 & 4 & 2 & 6 & 4 & 6 & 4 & 10 & 4
\end{tabular}
\end{center}
Then
\begin{eqnarray}
\mathcal{N}_n & = & \frac{1}{n} \sum_{a|n} \mu(\frac{n}{a}) 2^a
\label{eq:numSS} \\
\mathcal{N}_n^{\rm rot} & = & 2 + \frac{n-3}{2} \varphi(n) \;, {\rm ~~for~} n \ge 3
\label{eq:numOSS}
\end{eqnarray}
where $\sum_{a|n}$ denotes summation over all divisors, $a$, of $n$.
The first formula, (\ref{eq:numSS}), is well-known
(see for instance \cite{Lo83,HaZh98} for a derivation).
To arrive at (\ref{eq:numOSS}) we
count the number of distinct (up to cyclic permutation) $\cS[l,m,n]$
for an arbitrary fixed $n$:
When $l = 1$ there is one distinct rotational \sew,
namely $\sL \sR^{n-1}$.
Similarly when $l = n-1$ there is only $\sL^{n-1} \sR$.
There are $(n-3)$ values of $l$ left to consider.
For each of these, there are $\varphi(n)$ possible values for $m$.
By Lem.~\ref{le:oss}\ref{it:notCycPerS},
the $\frac{\varphi(n)}{2}$ values of $m$ that are less than
$\frac{n}{2}$ yield distinct \sew s and by Lem.~\ref{le:oss}\ref{it:cycPerS},
the remaining values of $m$ only produce cyclic permutations.
Thus we have (\ref{eq:numOSS}).

The first few values of $\mathcal{N}_n$ and $\mathcal{N}_n^{\rm rot}$ are:
\begin{center}
\begin{tabular}{c|cccccccccccc}
n & 1 & 2 & 3 & 4 & 5 & 6 & 7 & 8 & 9 & 10 & 11 & 12 \\ \hline
$\mathcal{N}_n$ & 2 & 1 & 2 & 3 & 6 & 9 & 18 & 30 & 56 & 99 & 186 & 335 \\ \hline
$\mathcal{N}_n^{\rm rot}$ & 0 & 1 & 2 & 3 & 6 & 5 & 14 & 12 & 20 & 16 & 42 & 20
\end{tabular}
\end{center}
All primitive \sew s of length $n < 6$ are rotational \sew s
except when $n = 1$ because we do not consider $\sL$ and $\sR$ to be rotational.
There are four distinct primitive \sew s
of length six that are not rotational. 
These are: ${\sf LRLRRR}$, ${\sf LLRLRR}$, ${\sf LLRRLR}$ and ${\sf LLLRLR}$.
Roughly, as $n$ increases the number of distinct primitive \sew s of length $n$
that are not rotational also increases.
To quantify this statement, $\mathcal{N}_n$ (\ref{eq:numSS}),
grows like ${\rm e}^n$, whereas $\mathcal{N}_n^{\rm rot}$ (\ref{eq:numOSS}),
grows like $n^2$ (since $\varphi(n)$ grows linearly
($\varphi(p) = p-1$ for any prime $p$)).
Thus for large $n$ the majority of primitive \sew s are not rotational.

\section{Shrinking points}
\label{sec:SHRINK}

Roughly speaking, as in \cite{YaHa87},
we call points where lens-chain shaped resonance tongues
have zero width, shrinking points.
The aim of this section is to provide a rigorous foundation
for the unfolding of shrinking points described in \S\ref{sec:UNFOLD}.
We define two classes of shrinking points: terminating and non-terminating.
This categorization provides a distinction between shrinking points that
lie at the end of a lens-chain (terminating)
and those that lie in the middle (non-terminating)
(recall Fig.~\ref{fig:res}).
We assume associated \sew s are rotational (see \S\ref{sec:ROTAT});
this assumption is crucial to our analysis.
Our main results are
Lem.~\ref{le:shrinkPoly}, which states
that at a shrinking point there exists an invariant, nonplanar
(though planar in special cases) polygon,
and Cor.~\ref{co:shrink1}, which
shows that shrinking points are a hub for the singularity of
important matrices,

Near a shrinking point, a lens shaped resonance tongue
corresponding to the existence of an admissible, stable
period-$n$ cycle, $\{ x_i \}$, has two boundaries.
These correspond to border-collision fold bifurcations of $\{ x_i \}$
with an unstable orbit of the same period.
At each boundary, one point on the orbit lies on the \sw.
The first question to address is the following:
which points lie on the \sw~at the two resonance tongue boundaries?

Suppose $\{ x_i \}$ has an associated \sew~that
is rotational, $\cS[l,m,n]$.
We may picture the points, $x_i$, lying on a topological circle,
as in Fig.~\ref{fig:rotSymSeq}.
The switching manifold intersects the circle at two points.
If this structure is maintained as parameters vary,
then it seems reasonable that only points that
lie adjacent to an intersection 
can collide with the switching manifold.
When $l \ne 1, n-1$, there are four such points:
$x_0$, $x_{-d}$, $x_{(l-1)d}$ and $x_{ld}$.
(For example, if $[l,m,n] = [3,2,7]$, as in Fig.~\ref{fig:rotSymSeq},
then $d = 4$ and the four adjacent points are
$x_0$, $x_3$, $x_1$ and $x_5$.)

Suppose w.l.o.g.~that $x_0$ lies on the \sw~at one resonance tongue boundary.
In the interior of the tongue the
\sew~of the corresponding unstable periodic solution
then differs from $\cS$ in
the $0^{\rm th}$ element, i.e., is $\cS^{\overline{0}}$.
If we assume that $\{ x_i \}$ collides
and annihilates with the same unstable periodic solution
on the second boundary, it must be $x_{(l-1)d}$ that lies on the
\sw~there because the only index $i$ for which $\cS^{\overline{id}}$
is a cyclic permutation of $\cS^{\overline{0}}$, is $i = l-1$.
Consequently, in view of Lem.~\ref{le:bc} since
$\{ x_i \}$ is not always well-defined,
we define non-\tsp s by the singularity of
$P_\cS$ and $P_{\cS^{((l-1)d)}}$
(Def.~\ref{def:shrink} below).
In \S\ref{sec:UNFOLD} we will show
that resonance tongue boundaries at which the remaining two points,
$x_{-d}$ and $x_{ld}$, lie on the \sw,
form a second lens shaped resonance tongue
emanating from the shrinking point.

As we will see, the cases $l = 1$ and $l = n-1$
correspond to \tsp s.
It suffices to consider only one of these cases,
we choose $l = n-1$,
because they are interchangeable via swapping
$\sL$ and $\sR$.
For the remainder of this paper we assume $l \ne 1$
which greatly simplifies our analysis.
In particular we find it is always reasonable to assume
that the unstable periodic solution described above
always exists (though it may not be admissible).
That is $(I-M_{\cS^{\overline{0}}})$ is nonsingular
and therefore the \ncss, (\ref{eq:nCycleSolutionSystem}),
of $\cS^{\overline{0}}$ has the unique solution
\begin{equation}
p = p_0 = \mu (I-M_{\cS^{\overline{0}}})^{-1} P_{\cS^{\overline{0}}} b \;.
\label{eq:pDef}
\end{equation}
We denote the $i^{\rm th}$ iterate of $p$ via the
\sew~$\cS^{\overline{0}}$ by $p_i$
and let $t_i$ denote its first component.
The orbit, $\{ p_i \}$, will play a pivotal role in our analysis.

\begin{definition}[Non-Terminating Shrinking Point]~\\
Consider the map (\ref{eq:pwaMap}) with $N \ge 2$
and suppose that $\mu \ne 0$ and $\varrho^{\sf T} b \ne 0$.\\
Let $\cS = \cS[l,m,n]$ be a rotational \sew~with $1 < l < n-1$.\\
Suppose
\begin{equation}
P_\cS {\rm ~and~} P_{\cS^{((l-1)d)}} {\rm ~are~singular.}
{\rm ~~~~~~(the~singularity~condition)}
\nonumber
\end{equation}
Let $\check{\cS} = \cS^{\overline{0}}$
and $\hat{\cS} = \cS^{\overline{ld}}$
and assume $(I-M_{\check{\cS}})$ and $(I-M_{\hat{\cS}})$ are nonsingular.\\
Suppose the orbit, $\{ p_i \}$, of (\ref{eq:pDef}), is admissible.\\
Then we say (\ref{eq:pwaMap}) is at a {\em non-\tsp}.
\label{def:shrink}
\end{definition}

Def.~\ref{def:shrink} essentially characterizes non-\tsp s
as the codimension-two phenomenon at which
the border-collision matrices $P_\cS$ and $P_{\cS^{((l-1)d)}}$ are
simultaneously singular.
The following definition for \tsp s is quite different
because the defining characteristic of these points,
in our opinion,
is the codimension-two requirement that one fixed point
(here $x^{*(\sL)}$ because we are assuming $l = n-1$) is admissible and
has a pair of associated multipliers on the unit circle with a particular
rational angular frequency.

\begin{definition}[Terminating Shrinking Point]~\\
Consider the map (\ref{eq:pwaMap}) with $N \ge 2$,
suppose $(I-A_\sL)$ is nonsingular
and $\frac{\mu \varrho^{\sf T} b}{\det(I-A_\sL)} < 0$
(i.e.~the fixed point $x^{*(\sL)}$ is admissible, see (\ref{eq:xStari1})).\\
Let $\cS = \cS[l,m,n]$ be a rotational \sew~with $l = n-1$ and $n \ge 3$.\\
Let $\check{\cS} = \cS^{\overline{0}}$
and suppose $(I-M_{\check{\cS}})$ is nonsingular.\\
Suppose
\begin{equation}
{\rm e}^{\pm \frac{2 \pi {\rm i} m}{n}} {\rm ~are~multipliers~of~} A_\sL \;.
{\rm ~~~~~~(the~singularity~condition)}
\nonumber
\end{equation}
Then we say (\ref{eq:pwaMap}) is at a
{\em \tsp}.
\label{def:tershr}
\end{definition}

\begin{figure}[ht]
\begin{center}
\includegraphics[width=5cm,height=6cm]{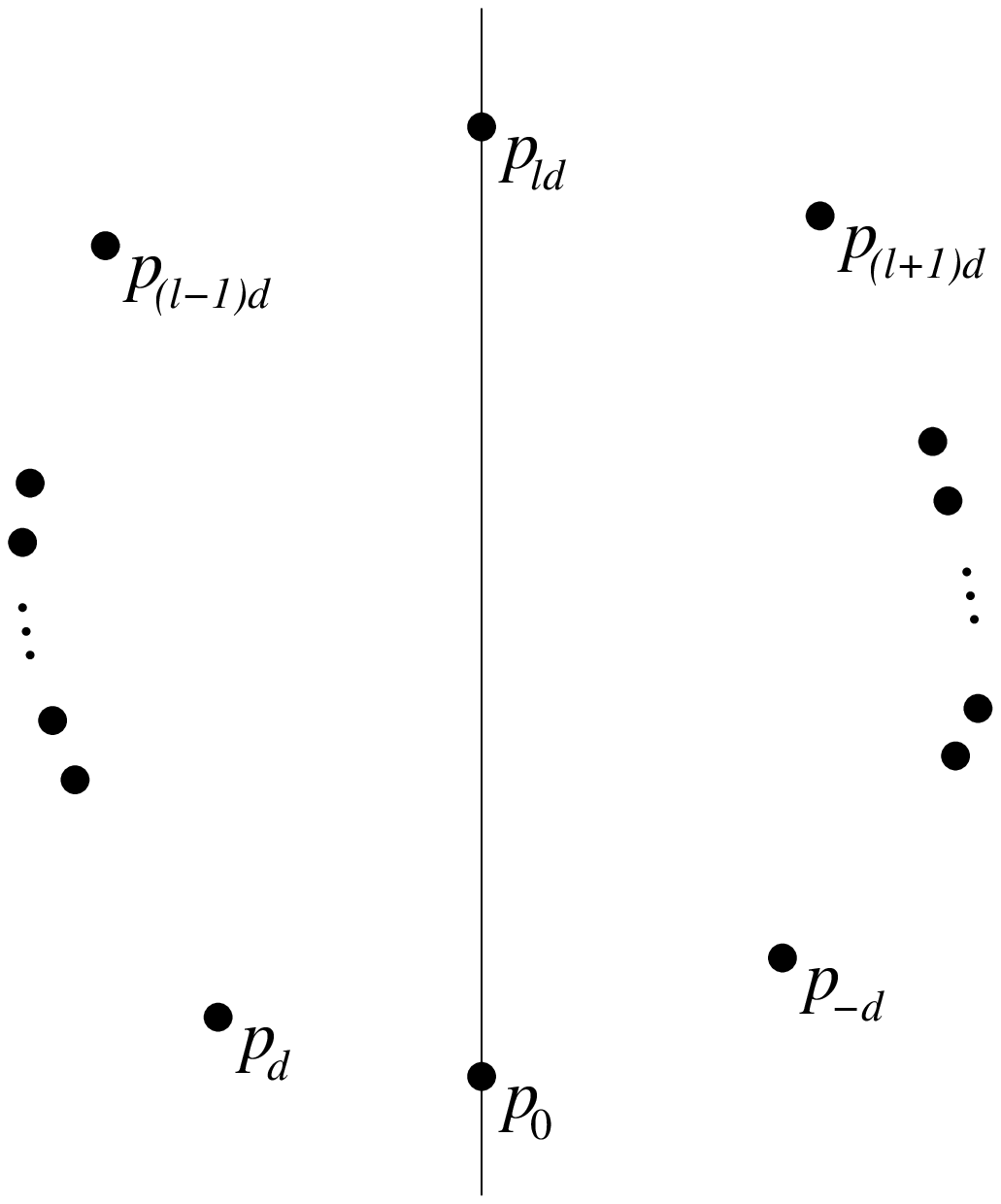}
\end{center}
\caption{
A schematic illustrating iterates of $p$, (\ref{eq:pDef}),
on or near the \sw~at a non-\tsp.
}
\label{fig:pSchem}
\end{figure}

In two dimensions, \tsp s are center bifurcations
for rational rotation numbers, $m/n$,
studied in \cite{SuGa08,SuGa06}.
These authors show there exists an invariant polygon
that has one side on the \sw, within which all points
other than the fixed point belong to periodic orbits with
rotation number, $m/n$.
As we will show, in higher dimensions this behavior occurs on the
center manifold of $x^{*(\sL)}$ corresponding to the multipliers
${\rm e}^{\pm \frac{2 \pi {\rm i} m}{n}}$, call it $E_c$.

The following lemma concerns the orbit, $\{ p_i \}$, of (\ref{eq:pDef}).
The reader should take care to notice that
singularity of the matrix $P_{\cS^{((l-1)d)}}$,
by Lem.~\ref{le:bc},
implies that $p_{ld}$ (not $p_{(l-1)d}$) lies on the \sw~because
\begin{equation}
P_{\check{\cS}^{(ld)}} = P_{\cS^{\overline{0}(ld)}} =
P_{\cS^{((l-1)d)\overline{0}}} = P_{\cS^{((l-1)d)}} \;,
\label{eq:equalPs}
\end{equation}
where the second equality is Lem.~\ref{le:oss}\ref{it:switchS}
and (\ref{eq:PindepS0}) is used for the last equality.

\begin{figure}[h]
\begin{center}
\includegraphics[width=7.2cm,height=6cm]{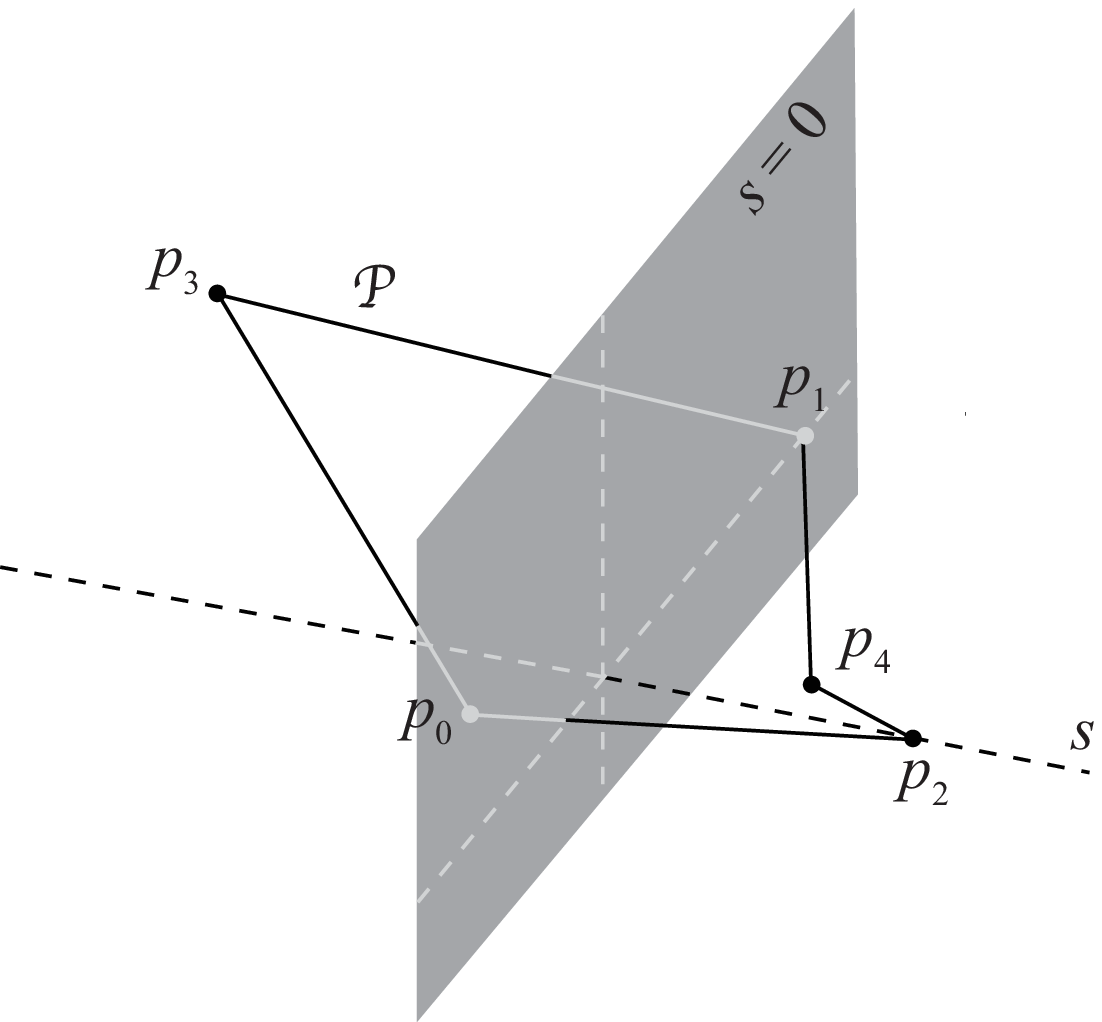}
\end{center}
{Figure 6: The invariant, nonplanar polygon, $\mathcal{P}$,
at a non-\tsp~corresponding to
the rotational \sew~$\mathcal{S}[2,2,5] = {\sf LRRLR}$
for the map (\ref{eq:pwaMap}) when
$A_\sL = \begin{bmatrix}
			0 & 1 & 0 \\
			1 & 0 & 1 \\
			\frac{28}{87} & 0 & 0
		 \end{bmatrix}$,
$A_\sR = \begin{bmatrix}
			-\frac{23}{14} & 1 & 0 \\
			0 & 0 & 1 \\
			\frac{3}{2} & 0 & 0
		\end{bmatrix}$,
$b = e_1$ and $\mu = 1$.
Here $p_0 = (0,-1,\frac{3}{2})^{\sf T}$.
The \sw, $s = 0$, is shaded gray.}
\label{fig:pent3d}
\end{figure}
\addtocounter{figure}{1}

\begin{lemma}
Suppose (\ref{eq:pwaMap}) is at a shrinking point.
Then,
\begin{enumerate}[label=\alph{*}),ref=\alpha{*}]
\item $t = t_{ld} = 0$;
\item if the shrinking point is non-terminating,
then $t_d, t_{(l-1)d} < 0$ and $t_{(l+1)d}, t_{-d} > 0$
as in Fig.~\ref{fig:pSchem};
if the shrinking point is terminating,
then $t_{id} < 0$ for all $i \ne 0,-1$;
\item $\{ p_i \}$ has period $n$;
\item the iterates of $p$ are the vertices of an invariant, nonplanar
$n$-gon, $\mathcal{P}$, that is comprised of uncountably many $\cS$-cycles
and the restriction of (\ref{eq:pwaMap}) to $\mathcal{P}$
is homeomorphic to rigid rotation with rotation number, $m/n$.
\end{enumerate}
\label{le:shrinkPoly}
\end{lemma}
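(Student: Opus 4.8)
The plan is to prove the four items in order, treating the non-terminating and terminating cases in parallel where possible but branching where the underlying mechanism differs.

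\textbf{Part (a).} First I would establish $t = t_0 = 0$ and $t_{ld} = 0$. For the non-terminating case, $t_0 = 0$ follows from Lemma~\ref{le:bc} applied to $\check{\cS} = \cS^{\overline{0}}$: we have $P_{\check{\cS}} = P_\cS$ by (\ref{eq:PindepS0}), which is singular by hypothesis, and $(I - M_{\check{\cS}})$ is nonsingular by hypothesis, so the unique solution $p = p_0$ of the \ncss~of $\check{\cS}$ lies on the \sw. Similarly, $t_{ld} = 0$ follows from Lemma~\ref{le:bc} applied to $\check{\cS}^{(ld)}$, using the chain of equalities (\ref{eq:equalPs}) to identify $P_{\check{\cS}^{(ld)}} = P_{\cS^{((l-1)d)}}$, which is singular by hypothesis, together with Lemma~\ref{le:cyclicDetM} to transfer nonsingularity of $(I - M_{\check{\cS}})$ to $(I - M_{\check{\cS}^{(ld)}})$. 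For the terminating case ($l = n-1$), I would argue that the fixed point $x^{*(\sL)}$, which is admissible by hypothesis, is itself the point $p_0$: indeed $\cS^{\overline{0}} = \sL^n$ up to the flip, so the \ncss~of $\check{\cS}$ reduces to the fixed-point equation for $A_\sL$; here I would need $(I - M_{\check{\cS}}) = (I - A_\sL^n)$ nonsingular, which is consistent with the hypothesis that $e^{\pm 2\pi i m/n}$ (and not $1$) are multipliers. Since $x^{*(\sL)}$ has $s^{*(\sL)} < 0$, it is not on the \sw; so the statement $t = 0$ must be interpreted on the center subspace $E_c$ — I would clarify that in the terminating case $p$ is taken to be a point of the invariant polygon sitting on $E_c \cap \{s = 0\}$ rather than $x^{*(\sL)}$ itself, and re-derive $t = t_{ld} = 0$ from the center-manifold reduction in \cite{SuGa08,SuGa06}.

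\textbf{Part (b).} The sign conditions on the remaining first components come from the admissibility hypothesis on $\{p_i\}$ combined with the \sew~combinatorics. By admissibility (\ref{eq:admissCond}), $t_{id}$ must have sign matching $\check{\cS}_{id}$: negative where $\check{\cS}_{id} = \sL$, positive where $\check{\cS}_{id} = \sR$, and any index where $t_{id} = 0$ imposes no constraint. From the definition (\ref{eq:rotSSdef}) of $\cS = \cS[l,m,n]$ and the flip at position $0$, I would read off that $\check{\cS}_{id} = \sL$ exactly for $i = 1, \ldots, l-1$ and $\check{\cS}_{id} = \sR$ for $i = l, \ldots, n-1$ together with $i = 0$ (the flipped entry). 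In particular $\check{\cS}_d = \sL$ gives $t_d < 0$, $\check{\cS}_{(l-1)d} = \sL$ gives $t_{(l-1)d} < 0$, and $\check{\cS}_{-d} = \check{\cS}_{(n-1)d} = \sR$ gives $t_{-d} > 0$; for $t_{(l+1)d}$ I would use that $\check{\cS}_{(l+1)d} = \sR$ when $l+1 \le n-1$, i.e.\ $l \ne n-1$, which holds in the non-terminating case. Strictness (not merely $\le 0$) at these four indices needs a nondegeneracy argument: none of $p_d, p_{(l-1)d}, p_{-d}, p_{(l+1)d}$ can lie on the \sw, because if say $t_d = 0$ then by Lemma~\ref{le:solvesAlso} $p$ would solve the \ncss~of $\check{\cS}^{\overline{d}}$ too, and I would derive a contradiction with $(I - M_{\hat\cS})$ being nonsingular (where $\hat\cS = \cS^{\overline{ld}}$) or with the codimension-two nature of the shrinking point. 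For the terminating case, $t_{id} < 0$ for all $i \ne 0, -1$ follows because all iterates of a point on $E_c$ near $x^{*(\sL)}$ lie strictly on the $\sL$ side except the one on the \sw~and (by the geometry of the invariant polygon of \cite{SuGa08,SuGa06}) one neighbor.

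\textbf{Parts (c) and (d).} These are the heart of the lemma. For periodicity, I note $p_n = p_0$ automatically because $p$ solves the \ncss~of a length-$n$ sequence; the content of (c) is that the period is exactly $n$, not a proper divisor. This follows from primitivity of $\cS[l,m,n]$ (Lemma~\ref{le:oss}\ref{it:primitiveS}), hence of $\check{\cS}$: if $\{p_i\}$ had period $n/k$, the symbolic itinerary of $p$ would be $k$-fold periodic, contradicting that $\check{\cS}$ (which the admissible orbit $\{p_i\}$ realizes as its itinerary, up to the harmless flip at an index where $t=0$) is primitive — here I would be slightly careful that the flipped entry at position $0$ does not spoil primitivity, using Lemma~\ref{le:oss}\ref{it:permDiffS}. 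For (d), the plan is: define $\mathcal{P}$ as the polygon with vertices $p_0, p_m, p_{2m}, \ldots$ in cyclic order (equivalently $p_0, p_d, p_{2d}, \ldots$ indexed so consecutive vertices differ by one application of the map), with edges the straight segments between consecutive vertices; then show the map (\ref{eq:pwaMap}) restricted to $\mathcal{P}$ permutes the edges and acts affinely on each edge. The key calculation is that on the edge between $p_{id}$ and $p_{(i+1)d}$, both endpoints lie strictly on one side of the \sw~(by part (b), except for the two edges adjacent to $p_0$ and $p_{ld}$), so the whole edge does, so the map acts there by a single affine map $f_\mu^{\cS_{id}}$, sending the segment $[p_{id}, p_{(i+1)d}]$ affinely onto $[p_{(i+1)d}, p_{(i+2)d}]$; and for the two special edges touching the \sw, continuity of (\ref{eq:pwaMap}) at the \sw~(so $f_\mu^\sL$ and $f_\mu^\sR$ agree there) means the image is still well-defined and affine. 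Thus the first return to any edge is affine and, going around once, the composition is the identity on that edge — so every point of $\mathcal{P}$ lies on an $\cS$-cycle. Conjugacy to rigid rotation with rotation number $m/n$: parametrize $\mathcal{P}$ by a circle coordinate $\theta \in \mathbb{R}/\mathbb{Z}$ assigning $\theta = j/n$ to vertex $p_{jm}$ and interpolating affinely along edges; the map advances $\theta \mapsto \theta + m/n$ by construction of the labeling (moving $m$ steps clockwise per iterate, exactly as in the pictorial description of Fig.~\ref{fig:rotSymSeq}). Non-planarity in general: I would remark it is generic because $n$ points determined by the \ncss~in $\mathbb{R}^N$ with $N \ge 2$ have no reason to be coplanar, and point to Figure~6 as an explicit $N = 3$ instance; planarity occurs only in degenerate configurations.

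\textbf{Main obstacle.} I expect the main difficulty to be establishing (d) cleanly — specifically, verifying that consecutive edges of $\mathcal{P}$ map \emph{onto} one another (surjectivity, not just that the image is a segment) and that the edges are non-degenerate (the $p_{id}$ are distinct and no three consecutive are collinear in a way that collapses an edge), so that the affine circle parametrization is well-defined and the return map is genuinely the identity rather than merely an affine contraction with a fixed point. This amounts to showing $(I - M_\cS)$ has a nontrivial kernel of the right dimension (generically one-dimensional) along which the affine solution set of (\ref{eq:nCycleSolutionSystem}) is a line through $p_0$, and that this line meets $\mathcal{P}$ in a full edge; I would handle it by combining Lemma~\ref{le:sn} (the singularity table, Table~\ref{fig:MPgrid}) — since $P_\cS$ is singular \emph{and} a solution $p$ exists, we are forced into the case $\det(I - M_\cS) = 0$ with possibly uncountably many solutions — with a direct argument that the solution set is exactly one-dimensional under the nondegeneracy hypotheses, and then that the whole segment between the two \sw-crossings is admissible for $\cS$, hence consists of genuine $\cS$-cycles filling one edge, with the rest of $\mathcal{P}$ obtained as its forward images.
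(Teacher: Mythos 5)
Your part (a) and the broad plan for (d) in the non-terminating case do track the paper, but the proposal has several genuine breakdowns. The most serious is the terminating case. With $l=n-1$ the sequence $\check{\cS}=\cS^{\overline{0}}$ is \emph{not} $\sL^n$: it has $\sR$ at the two indices $0$ and $-d$, so $M_{\check{\cS}}\ne A_\sL^n$ and $p$ is not the fixed point $x^{*(\sL)}$. (Your own consistency check also fails: if $M_{\check{\cS}}$ were $A_\sL^n$, then the singularity condition ${\rm e}^{\pm 2\pi {\rm i} m/n}$ being multipliers of $A_\sL$ would make $I-M_{\check{\cS}}$ singular, contradicting Def.~\ref{def:tershr}.) There is likewise no freedom to ``reinterpret'' $p$ on $E_c$, since $p$ is pinned down by (\ref{eq:pDef}); and deferring to the planar center-bifurcation results of \cite{SuGa08,SuGa06} cannot settle an $N$-dimensional statement. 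The paper's argument is constructive: take the eigenvector $v=y+{\rm i}z$ of $A_\sL$ for ${\rm e}^{\pm 2\pi {\rm i}m/n}$ (its first component is nonzero, else $I-M_{\check{\cS}}$ would be singular), set $x=\alpha y+\beta z+x^{*(\sL)}$, choose $\alpha,\beta$ so that $s_0=s_{-d}=0$, obtain the closed form (\ref{eq:sid}) showing all other $s_{id}<0$, and conclude this admissible period-$n$ orbit solves the \ncss~of $\check{\cS}$ (at the two on-manifold points $f^\sL_\mu=f^\sR_\mu$), hence equals $p$ by uniqueness; this yields (a)--(c) at once.

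In the non-terminating case there are three further gaps. For the strict signs in (b) you only gesture at a contradiction ``with $(I-M_{\hat{\cS}})$ nonsingular or with the codimension-two nature''; the working argument is different: if $t_d=0$ then, since also $t_{ld}=0$, two applications of Lem.~\ref{le:solvesAlso} show $p$ solves the \ncss~of $\check{\cS}^{\overline{d}\,\overline{ld}}=\check{\cS}^{(-d)}$, whence $p=p_d$, and admissibility then forces $p=p_1$, a fixed point on the \sw, impossible because $\mu\ne0$ and $\varrho^{\sf T}b\ne0$. For (c), the primitivity route does not work as stated: a period-$\tilde{n}$ orbit only forces the symbols at indices where $t_i\ne0$, so the itinerary is not uniquely $\check{\cS}$ and its $\tilde{n}$-periodicity cannot be directly contradicted; moreover primitivity of $\cS$ does not by itself give primitivity of $\cS^{\overline{0}}$. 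The paper instead uses only $t_d<0$ and $t_{-d}>0$: periodicity forces $\check{\cS}_{i\tilde{n}+d}=\sL$ and $\check{\cS}_{i\tilde{n}-d}=\sR$ for all $i$, giving the incompatible bounds $l\ge n-\tilde{n}+2$ and $l<\tilde{n}$. Finally, in (d) your ``main obstacle'' is a non-issue under the paper's construction: since $p$ and $p_d$ both solve the affine \ncss~of $\cS$, every convex combination $w(\tau)=\tau p+(1-\tau)p_d$ does, and admissibility makes each $w(\tau)$ itself an $\cS$-cycle, so no edge-surjectivity or return-map argument is needed; the step you actually omit is that the union $\mathcal{P}$ of these cycles has no self-intersections, which the paper obtains from the signs in (b) together with Lem.~\ref{le:oss}\ref{it:permDiffS} (an interior intersection would give an admissible cycle with $s_0<0$, $s_{-d}>0$ for two different cyclic shifts, forcing the shift to be trivial).
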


Separate proofs for non-terminating and \tsp s
are given in Appendix \ref{sec:PROOFSb}.
If the shrinking point is terminating, then the polygon $\mathcal{P}$ is planar.
In general, when $N > 2$, $\mathcal{P}$ is nonplanar as in Fig.~6.   
In any case $\mathcal{P}$ has two vertices on the \sw.

$\mathcal{P}$ is comprised of many $\cS$-cycles,
in other words the \ncss~of $\cS$ has more than one distinct solution.
Therefore at a shrinking point the matrix $(I-M_\cS)$ is singular.
Furthermore, for each $i$, by Lem.~\ref{le:cyclicDetM},
$(I-M_{\cS^{(i)}})$ is singular.
However, there exist solutions to the \ncss~of $\cS^{(i)}$
(such as $p_i$), so to avoid a contradiction with Lem.~\ref{le:sn},
$P_{\cS^{(i)}}$ must be singular.
Consequently we have the following:

\begin{corollary}
Suppose (\ref{eq:pwaMap}) is at a shrinking point.
Then,
\begin{enumerate}[label=\alph{*}),ref=\alpha{*}]
\item $(I-M_\cS)$ is singular,
\item $P_{\cS^{(i)}}$ is singular, for all $i$.
\end{enumerate}
\label{co:shrink1}
\end{corollary}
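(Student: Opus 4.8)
The proof of Corollary~\ref{co:shrink1} is essentially laid out in the paragraph immediately preceding its statement, so the plan is to make that reasoning rigorous. The starting point is Lemma~\ref{le:shrinkPoly}(d): at a shrinking point the polygon $\mathcal{P}$ is comprised of uncountably many distinct $\cS$-cycles. Each such cycle is a distinct solution $x_0$ of the \ncss~(\ref{eq:nCycleSolutionSystem}) of $\cS$. If $(I-M_\cS)$ were nonsingular, the \ncss~would have the unique solution (\ref{eq:nCycleSolution}), contradicting the existence of more than one solution. Hence $(I-M_\cS)$ is singular, which is part (a). I should be slightly careful that the uncountably many $\cS$-cycles really do give rise to uncountably many distinct values of $x_0$ — but since the vertices of $\mathcal{P}$ are distinct and each $\cS$-cycle visits the vertices in the same cyclic order, distinct cycles have distinct base points $x_0$; alternatively, one may simply note that two solutions suffice for the contradiction, and Lemma~\ref{le:shrinkPoly}(d) gives at least that.

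For part (b), I would argue index by index. Fix any $i$. By Lemma~\ref{le:cyclicDetM}, $\det(I-M_{\cS^{(i)}}) = \det(I-M_\cS) = 0$, so $(I-M_{\cS^{(i)}})$ is singular. Now I need a solution to the \ncss~of $\cS^{(i)}$. The natural candidate is $p_i$, the $i^{\rm th}$ iterate of $p$ along $\cS^{\overline{0}}$: more precisely, $\mathcal{P}$ is invariant and its vertices are $\cS$-cycle points, so for each $i$ the vertex reached after $i$ steps is a periodic point whose itinerary (read starting from that vertex) is the cyclic permutation $\cS^{(i)}$, hence it solves the \ncss~of $\cS^{(i)}$. (One must take mild care with the indices where the switching manifold is crossed, using Lemma~\ref{le:solvesAlso} to see that the choice of symbol there is immaterial; but since $\mathcal{P}\subset\{x : \cS\text{-cycle point}\}$ and the \ncss~only cares about the product structure of the $A_{\cS_j}$, this is harmless.) So the \ncss~of $\cS^{(i)}$ has a solution despite $(I-M_{\cS^{(i)}})$ being singular. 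By the contrapositive of Lemma~\ref{le:sn}, this forces $P_{\cS^{(i)}}$ to be singular. Since $i$ was arbitrary, part (b) follows.

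The one genuine subtlety — and the place I would be most careful — is the application of Lemma~\ref{le:sn}, which has the standing hypotheses $\mu \ne 0$ and $\varrho^{\sf T} b \ne 0$. These hold at any shrinking point by Defs.~\ref{def:shrink} and \ref{def:tershr}, so the lemma applies; but I should flag that the logical form used is the contrapositive: \emph{if} $P_{\cS^{(i)}}$ were nonsingular, \emph{then} a solution to the \ncss~of $\cS^{(i)}$ would exist only if $(I-M_{\cS^{(i)}})$ were nonsingular, contradicting Lemma~\ref{le:cyclicDetM}. The only other point needing a line of justification is that $p_i$ (equivalently, the $i$-th vertex of $\mathcal{P}$) really is a solution of the \ncss~of $\cS^{(i)}$ rather than merely of some relabelled system; this is immediate from the definition of $\mathcal{P}$ in Lemma~\ref{le:shrinkPoly}(d) together with the fact that shifting the base point of a periodic orbit by one step replaces $\cS$ by $\cS^{(1)}$ in the \ncss. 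I do not expect any computation here — the whole corollary is a short logical deduction from Lemmas~\ref{le:solvesAlso}–\ref{le:sn} and \ref{le:shrinkPoly}, and indeed the excerpt already states the argument in prose; the task is just to record it cleanly.
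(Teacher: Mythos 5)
Your argument is correct and is essentially the paper's own: the paper deduces (a) from the fact that Lemma~\ref{le:shrinkPoly} gives more than one distinct solution of the \ncss~of $\cS$ (ruling out nonsingularity of $I-M_\cS$), and (b) from Lemma~\ref{le:cyclicDetM} plus the existence of the solution $p_i$ to the \ncss~of $\cS^{(i)}$, invoking the contrapositive of Lemma~\ref{le:sn} exactly as you do. Your extra remarks (checking $\mu\ne 0$, $\varrho^{\sf T}b\ne 0$, and that $p_i$ genuinely solves the permuted system via Lemma~\ref{le:solvesAlso}) only make explicit what the paper leaves implicit.
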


The singularity of a matrix is a codimension-one phenomenon.
Thus by Cor.~\ref{co:shrink1}, we expect there will be
curves in two-dimensional parameter space passing through 
shrinking points along which $(I-M_\cS)$ and each $P_{\cS^{(i)}}$
is singular.
Some of these curves form resonance tongue boundaries,
and it is these that form the focus of the following section.

\section{Unfolding shrinking points}
\label{sec:UNFOLD}

This section presents an unfolding of the dynamics near terminating and non-\tsp s.
We begin by assuming (\ref{eq:pwaMap}) is parameter dependent
and perform a coordinate transformation such that, locally,
in two dimensions, two tongue boundaries lie on the positive
coordinate axes.

Suppose (\ref{eq:pwaMap}) varies with
a set of parameters $\xi \in \mathbb{R}^K$ ($K \ge 2$) and is $C^{k+1}$ ($k \ge 2$).
Then $A_\sL$ and $A_\sR$ are $C^k$ functions of $\xi$.
Suppose (\ref{eq:pwaMap}) is at a shrinking point 
when $\xi = 0$, for some fixed $\mu \ne 0$.
Then, $\det(P_\cS(\xi))$ and $\det(P_{\cS^{((l-1)d)}}(\xi))$ are $C^k$
scalar functions that have the value zero when $\xi = 0$.
If the $2 \times 2$ Jacobian
$\frac{\partial(\det(P_\cS),\det(P_{\cS^{((l-1)d)}}))}{\partial( \xi_i, \partial \xi_j)}$
is nonsingular for some $i^{\rm th}$ and $j^{\rm th}$ components
of $\xi$, we may utilize the implicit function theorem
to obtain a coordinate transformation of $\xi$, such that
$\det(P_\cS) = 0$ when $\eta = \xi_i = 0$
and $\det(P_{\cS^{((l-1)d)}}) = 0$ when $\nu = \xi_j = 0$.
In what follows below we omit dependence of (\ref{eq:pwaMap})
on the remaining $(K-2)$ components of $\xi$.

Let $\check{x}(\eta,\nu)$ denote the unique solution to the
\ncss, (\ref{eq:nCycleSolutionSystem}),
of $\check{\cS} = \cS^{\overline{0}}$ for small values of $\eta$ and $\mu$.
Let $p = \check{x}(0,0)$ (to coincide with (\ref{eq:pDef})).
If the shrinking point is non-terminating,
let $\hat{x}(\eta,\nu)$ denote the unique solution to the
\ncss~of $\hat{\cS} = \cS^{\overline{ld}}$ for small values of $\eta$ and $\nu$.
Let $\check{s}_i$ and $\hat{s}_i$ denote the
first components of $\check{x}_i$ and $\hat{x}_i$, respectively.
From the above assumptions on $\eta$ and $\mu$,
by Lem.~\ref{le:bc}
we have $\check{s}(0,\nu) = 0$ and $\check{s}_{ld}(\eta,0) = 0$,
(see (\ref{eq:equalPs})).
If $\frac{\partial}{\partial \eta} \check{s}(0,0)$
and $\frac{\partial}{\partial \nu} \check{s}_{ld}(0,0)$
are both nonzero,
we may redefine $\eta$ and $\nu$ via a nonlinear scaling so that
\begin{eqnarray}
\check{s}(\eta,\nu) & = & \eta(1 + O(1)) \label{eq:nuAxis} \;, \\
\check{s}_{ld}(\eta,\nu) & = & \nu(1 + O(1)) \label{eq:etaAxis} \;. 
\end{eqnarray}
Note, we use $O(k)$ (and $o(k)$)
to denote terms that are order $k$ or larger (larger than order $k$)
in all variables and parameters.

\begin{theorem}
Suppose the map (\ref{eq:pwaMap}) is a $C^{k+1}$ ($k \ge 2$)
function of parameters $\eta$ and $\nu$,
has a shrinking point when $\eta = \nu = 0$
and we have (\ref{eq:nuAxis}) and (\ref{eq:etaAxis}).
Write
\begin{equation}
\det(I-M_\cS(\eta,\nu)) = k_1 \eta + k_2 \nu + O(2) \;,
\label{eq:detMTS}
\end{equation}
and assume $k_1,k_2 \ne 0$. As usual we assume $N \ge 2$.

Let $\Psi_1 = \{ (\eta,\nu) ~|~ \eta, \nu \ge 0 \}$.
Then for small $\eta$ and $\nu$,
the $\cS$ and $\check{\cS}$-cycles are admissible in $\Psi_1$
and collide in border-collision fold bifurcations
at the boundaries, $\eta = 0$ and $\nu = 0$.

If the shrinking point is non-terminating,
there exist $C^k$ functions 
$g_1, g_2 : \mathbb{R} \to \mathbb{R}$
such that for small $\eta$ and $\nu$,
the $\cS$ and $\hat{\cS}$-cycles are admissible in
$\Psi_2 = \{ (\eta,\nu) ~|~ \eta \le g_1(\nu), \nu \le g_2(\eta) \}$ and
collide in border-collision fold bifurcations
at the boundaries, $\eta = g_1(\nu)$ and $\nu = g_2(\eta)$.
Furthermore, the lowest-order, nonzero terms of $g_1$ and $g_2$ are
$g_1''(0), g_2''(0) < 0$.
\label{th:shrink}
\end{theorem}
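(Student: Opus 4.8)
The plan is to reduce every admissibility or border-collision statement to the behaviour of the four ``adjacent'' iterates $x_0,x_{-d},x_{(l-1)d},x_{ld}$ of the $\cS$-cycle: by Lem.~\ref{le:shrinkPoly} together with continuity, every other iterate $x_{id}$ stays near $p_{id}$, whose first component $t_{id}\neq 0$ already has the sign forced by admissibility of $\cS$, $\check\cS$ and $\hat\cS$. First I would record four identities obtained from Lem.~\ref{le:detPair} (after renaming determinants via Lem.~\ref{le:cyclicDetM}) applied to the flip-at-$0$ pairs $(\cS,\cS^{\overline{0}})$, $(\cS^{((l-1)d)},\check\cS^{(ld)})$, $(\cS^{(-d)},\hat\cS)$ and $(\cS^{(ld)},\hat\cS^{(ld)})$, using Lem.~\ref{le:oss}\ref{it:switchS} and the directly checked identities $\cS^{\overline{0}\,\overline{ld}}=\hat\cS^{\overline{0}}=\cS^{(-d)}$:
\begin{align*}
\det(I-M_\cS)\,s_0 &=\det(I-M_{\check\cS})\,\check s_0, & \det(I-M_\cS)\,s_{(l-1)d}&=\det(I-M_{\check\cS})\,\check s_{ld},\\
\det(I-M_\cS)\,s_{-d}&=\det(I-M_{\hat\cS})\,\hat s_0, & \det(I-M_\cS)\,s_{ld}&=\det(I-M_{\hat\cS})\,\hat s_{ld}.
\end{align*}
Since $\check x$ and $\hat x$ are the unique ($C^k$) solutions of their \ncss s near $\eta=\nu=0$, these identities transfer all of the analysis onto the smooth functions $\check s_0,\check s_{ld},\hat s_0,\hat s_{ld}$ and $\det(I-M_\cS)=k_1\eta+k_2\nu+O(2)$.

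Second I would pin down the behaviour at the shrinking point. Because $t_0=t_{ld}=0$ (Lem.~\ref{le:shrinkPoly}a), Lem.~\ref{le:solvesAlso} shows that $\{p_i\}$ solves the \ncss s of $\cS$, $\check\cS$ and $\hat\cS$, so by uniqueness $\check x_i(0,0)=\hat x_i(0,0)=p_i$ and all of $\check s_0,\check s_{ld},\hat s_0,\hat s_{ld}$ vanish at the origin. Along $\{\eta=0\}$ the relation $\check s_0=0$ forces (again Lem.~\ref{le:solvesAlso}) the $\check\cS$-cycle to coincide with the $\cS$-cycle, while along $\{\nu=0\}$ the relation $\check s_{ld}=0$ forces it to coincide with the $\cS^{(-d)}$-cycle; combining this with (\ref{eq:nuAxis})--(\ref{eq:etaAxis}) gives $s_{-d}(\eta,0)=\check s_0(\eta,0)=\eta+O(\eta^2)$, $s_{ld}(0,\nu)=\check s_{ld}(0,\nu)=\nu+O(\nu^2)$, $s_{-d}(0,\nu)\to t_{-d}$ and $s_{ld}(\eta,0)\to t_{(l+1)d}$. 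Finally, because $\mathrm{adj}(I-M_{\cS^{(i)}})$ annihilates $\mu P_{\cS^{(i)}}b$ at the shrinking point, $x_0(\eta,\nu)$ stays bounded as $(\eta,\nu)\to0$, so along the positive axes the $\cS$-cycle converges to genuine (hence admissible) cycles on the invariant polygon $\mathcal{P}$; reading off $s_0\to t_d<0$ and $s_{(l-1)d}\to t_{(l-1)d}<0$ from the first two identities then forces $\operatorname{sgn}k_1=\operatorname{sgn}k_2=-\operatorname{sgn}\det(I-M_{\check\cS}(0,0))$.

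For $\Psi_1$: since $k_1,k_2$ have a common sign, $\det(I-M_\cS)$ is nonzero on $\Psi_1\setminus\{0\}$ with sign $\operatorname{sgn}k_1$, so the $\cS$-cycle is uniquely defined there. The first two identities give $s_0,s_{(l-1)d}\le 0$ and $\check s_0,\check s_{ld}\ge 0$ on $\Psi_1$, and a short sign check along a ray into $\Psi_1$ (in which the factor $\det(I-M_{\hat\cS})$ cancels, leaving $t_{-d},t_{(l+1)d}>0$) gives $s_{-d},s_{ld}>0$; with the remaining iterates sitting near the $p_{id}$ this shows both the $\cS$- and $\check\cS$-cycles are admissible on $\Psi_1$. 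The boundaries $\{\eta=0\}=\{s_0=0\}$ and $\{\nu=0\}=\{s_{(l-1)d}=0\}$ are border collisions, and since $\det(I-M_\cS)\det(I-M_{\check\cS})<0$ on $\Psi_1$ the two orbits coexist there, so by the dichotomy of \S\ref{sec:FRAME} each boundary is a (border-collision) fold.

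For $\Psi_2$: substituting the axis data above and $\det(I-M_\cS)=k_1\eta+k_2\nu+O(2)$ into the last two identities yields, with $\delta:=\det(I-M_{\hat\cS}(0,0))\neq 0$,
\begin{align*}
\hat s_0 &= \frac{k_2 t_{-d}}{\delta}\,\nu + \frac{k_1}{\delta}\,\eta^2 + O(3), & \hat s_{ld} &= \frac{k_1 t_{(l+1)d}}{\delta}\,\eta + \frac{k_2}{\delta}\,\nu^2 + O(3),
\end{align*}
so $\hat s_0$ has no linear $\eta$-term and $\hat s_{ld}$ no linear $\nu$-term. The implicit function theorem applied to $\hat s_{ld}=0$ and $\hat s_0=0$ then produces $C^k$ functions $\eta=g_1(\nu)=-\frac{k_2}{k_1 t_{(l+1)d}}\nu^2+O(\nu^3)$ and $\nu=g_2(\eta)=-\frac{k_1}{k_2 t_{-d}}\eta^2+O(\eta^3)$ (the factor $\delta$ cancels), and since $k_1/k_2>0$ and $t_{-d},t_{(l+1)d}>0$ (Lem.~\ref{le:shrinkPoly}b, non-terminating case), $g_1''(0),g_2''(0)<0$. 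The four identities then give $s_0,s_{(l-1)d}<0$ and $s_{-d},s_{ld}>0$ on $\Psi_2$, so the $\cS$-cycle is admissible there, and the boundaries $\{\eta=g_1(\nu)\}=\{s_{ld}=0\}$, $\{\nu=g_2(\eta)\}=\{s_{-d}=0\}$ are border collisions with the $\hat\cS$-cycle by Lem.~\ref{le:solvesAlso}. The step I expect to be the main obstacle is the remaining piece: the $\hat\cS$-cycle must be admissible on $\Psi_2$ (which then makes the two boundaries folds, via coexistence with the $\cS$-cycle and \S\ref{sec:FRAME}). The identities show the $\hat\cS$-cycle is admissible in exactly one of $\Psi_1,\Psi_2$, according to whether $\operatorname{sgn}\delta$ equals $-\operatorname{sgn}k_1$ or $\operatorname{sgn}k_1$; I would try to force $\operatorname{sgn}\delta=\operatorname{sgn}k_1$ (equivalently, rule out the $\hat\cS$-cycle in $\Psi_1$) by writing the rank-one updates $M_{\hat\cS}=M_\cS-uv^{\sf T}$, $M_{\check\cS}=M_\cS-u'v'^{\sf T}$, using the matrix-determinant lemma $\det(I-M_{\hat\cS})=\det(I-M_\cS)+v^{\sf T}\mathrm{adj}(I-M_\cS)u$, evaluating at the shrinking point where $\mathrm{adj}(I-M_\cS)$ is rank one along the null directions of $I-M_\cS$ (so that $\delta$ and $\det(I-M_{\check\cS}(0,0))$ become explicit pairings against those directions), and comparing the two to conclude $\operatorname{sgn}\delta=-\operatorname{sgn}\det(I-M_{\check\cS}(0,0))=\operatorname{sgn}k_1$.
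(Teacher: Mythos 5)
Your overall route is the paper's route: the same four applications of Lem.~\ref{le:detPair} (with Lem.~\ref{le:cyclicDetM}), the same coincidences $x_{id}(0,\nu)=\check{x}_{id}(0,\nu)$ and $x_{id}(\eta,0)=\check{x}_{(i+1)d}(\eta,0)$ giving the axis data, and the same implicit-function-theorem construction of $g_1,g_2$ with the $k_4$-factor cancelling, so $g_1''(0),g_2''(0)<0$ follows once $k_1k_2>0$ and $t_{-d},t_{(l+1)d}>0$ are in hand. Your derivation of ${\rm sgn}(k_1)={\rm sgn}(k_2)=-{\rm sgn}(k_3)$ by evaluating the first two identities directly on the coordinate axes (where $s_0\to t_d<0$, $s_{(l-1)d}\to t_{(l-1)d}<0$) is in fact a slight streamlining of the paper, which instead first rules out $k_1k_2<0$ by a contradiction argument in the second quadrant and then reads off the sign of $k_3$ from continuity of $s$ in $\Sigma_1,\Sigma_3$; your version is valid and avoids that detour. (Minor point: your displayed two-variable expansions of $\hat{s}$ and $\hat{s}_{ld}$ with $O(3)$ remainders are not justified --- the cross term $\eta\nu$ is only $O(2)$ --- but since $g_1'(0)=g_2'(0)=0$ this does not affect the quoted leading coefficients; the paper only uses the two axis expansions, which suffice.)

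The genuine gap is exactly the step you flag and then leave as a strategy: the sign of $\delta=\det(I-M_{\hat\cS}(0,0))$ (the paper's $k_4$). Everything about the $\hat\cS$-cycle hinges on it: the identities give ${\rm sgn}\,\hat{s}(0,\nu)={\rm sgn}(k_2/k_4)\,{\rm sgn}(\nu)$ and ${\rm sgn}\,\hat{s}_{ld}(\eta,0)={\rm sgn}(k_1/k_4)\,{\rm sgn}(\eta)$, so whether $\hat{s},\hat{s}_{ld}\le 0$ on the side $\Psi_2$ of the curves $\nu=g_2(\eta)$, $\eta=g_1(\nu)$ --- i.e.\ whether the $\hat\cS$-cycle is admissible in $\Psi_2$ rather than in (roughly) $\Psi_1$ --- is equivalent to $k_1k_4>0$, and this also carries the ``fold'' character of the curved boundaries, since the fold requires coexistence of the admissible $\cS$- and $\hat\cS$-cycles on the same side. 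As you correctly observe, the $k_4$-dependence cancels in every statement about the $\cS$-cycle, so no amount of reuse of the four identities along axes or rays can pin it down; an additional input is needed. The paper supplies it within its quadrant-sign machinery, establishing the full sign relation ${\rm sgn}(k_1)={\rm sgn}(k_2)=-{\rm sgn}(k_3)={\rm sgn}(k_4)$ (its equation (\ref{eq:allK})) before concluding $\frac{\partial\hat{s}}{\partial\nu}(0,0)=\frac{k_2t_{-d}}{k_4}>0$ and hence admissibility of the $\hat\cS$-cycle in $\Psi_2$. Your proposed substitute --- the rank-one updates $M_{\check\cS}-M_\cS$, $M_{\hat\cS}-M_\cS$, the matrix-determinant lemma, and the rank-one structure of ${\rm adj}(I-M_\cS)$ at the shrinking point --- is a plausible direction (the right null vector is spanned by $p-p_d$, which makes part of the pairing explicit), but you have not carried out the comparison of the two pairings, and it is not obvious that their relative sign falls out without further use of admissibility of $\{p_i\}$. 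Until ${\rm sgn}\,\delta={\rm sgn}\,k_1$ is actually proved, the second half of the theorem (admissibility of the $\hat\cS$-cycle in $\Psi_2$ and the fold nature of the boundaries $\eta=g_1(\nu)$, $\nu=g_2(\eta)$) is not established.
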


Near non-\tsp s, Th.~\ref{th:shrink}
predicts the bifurcation set sketched in Fig.~\ref{fig:shrinkSchematic}.
The four resonance tongue boundaries that emanate from the
shrinking point correspond to the collision of
the points of the $\cS$-cycle:
$x_0$, $x_{-d}$, $x_{(l-1)d}$, $x_{ld}$, with the \sw.
As mentioned in \S\ref{sec:SHRINK} these are the four points
adjacent to the \sw~in the interior of the tongue.

By applying the implicit function theorem to (\ref{eq:detMTS})
there exists a $C^k$ function
\begin{equation}
h(\eta) = -\frac{k_1}{k_2} \eta + O(\eta^2) \;,
\label{eq:shrinkH}
\end{equation}
such that $\det(I-M_\cS(\eta,h(\eta))) = 0$.
But by Th.~\ref{th:shrink},
$\cS$-cycles exist throughout the first quadrant of parameter space
therefore the curve $\nu = h(\eta)$ must not enter this quadrant.
Hence, since $h'(0) = -\frac{k_1}{k_2}$,
\begin{equation}
k_1 k_2 < 0 \;,
\label{eq:k1k2}
\end{equation}
as in Fig.~\ref{fig:shrinkSchematic}.

By Cor.~\ref{co:shrink1}, the border-collision matrix
of every cyclic permutation of $\cS$ is singular at a shrinking point.
Th.~\ref{th:shrink} describes curves along which
some of these matrices remain singular.
We now describe all other such curves.
If $p_0$ and $p_{ld}$ are the only points of the period-$n$ cycle, $\{ p_i \}$,
that lie on the \sw~(as is generically the case),
then for each $i \ne 0, l-1, l, -1$ if the shrinking point is non-terminating
and for each $i \ne 0, -2, -1$ if the shrinking point is terminating,
there exists a $C^k$ function
\begin{equation}
q_i(\eta) = -\frac{k_1 t_{(i+1)d}}{k_2 t_{id}} \eta + O(\eta^2) \;,
\end{equation}
that satisfies $\det(P_{\cS^{(id)}}(\eta,q_i(\eta))) = 0$.
In each case $t_{(i+1)d}$ and $t_{id}$ have the same sign,
thus by (\ref{eq:k1k2}) we have $q_i'(0) < 0$.
Although the border-collision matrix, $P_{\cS^{(id)}}$, is singular
along $\nu = q_i(\eta)$,
these curves are not seen in resonance tongue diagrams because
the related periodic solutions are virtual.

So far we have described curves passing
through the shrinking point along which $P_{\cS^{(id)}}$ is singular
for each $i$, except for $i = -1$ when the shrinking point is terminating.
Here $\cS^{(-d)} = \sR \sL^{n-1}$, hence
$P_{\cS^{(-d)}} = I + A_\sL + \cdots + A_\sL^{n-1}
= (I-A_\sL)^{-1} (I-A_\sL^n)$.
The matrix $A_\sL^n$ has a multiplier 1 with an algebraic multiplicity
of at least two, therefore $\det(P_{\cS^{(-d)}}(\eta,\nu)) = O(2)$.
Thus in this case, for small $\eta$ and $\nu$,
generically $P_{\cS^{(-d)}}$ is singular only when $\eta = \nu = 0$. 

In addition, for \tsp s there generically exists a curve in parameter
space passing through the shrinking point along which
the matrix, $A_\sL$, has a pair of complex multipliers on the unit circle.
As one moves along the curve, the angular frequency of multipliers changes.
Whenever the angular frequency is rational there may be additional \tsp s.
Such curves are described in \cite{SiMe08b,SuGa08,SuGa06,ZhMo08b}.

The border-collision bifurcations occurring on
resonance tongue boundaries are described by
piecewise-affine maps of the form (\ref{eq:nthPWA}).
Th.~\ref{th:shrink} states that these bifurcations are nonsmooth folds,
thus we may use Feigen's results \cite{DiFe99}, summarized in \S\ref{sec:FRAME},
to determine the relative stability of periodic solutions
near shrinking points.
Let $a_i[\cS]$ denote the number of real multipliers of $M_\cS$
that are greater than one in the interior of $\Psi_i$.
Then $a_1[\cS] + a_1[\check{\cS}]$ and
$a_2[\cS] + a_2[\hat{\cS}]$ are odd.
Also, $\det(I-M_\cS(\eta,\nu)) = O(1)$, (\ref{eq:detMTS}),
thus $|a_1[\cS] - a_2[\cS]| = 1$.
Consequently, if $\cS$-cycles are stable in the interior of $\Psi_1$,
then $\check{\cS}$-cycles are unstable and
$\cS$-cycles are unstable in the interior of $\Psi_2$.

\begin{figure}[ht]
\begin{center}
\includegraphics[width=8.4cm,height=7cm]{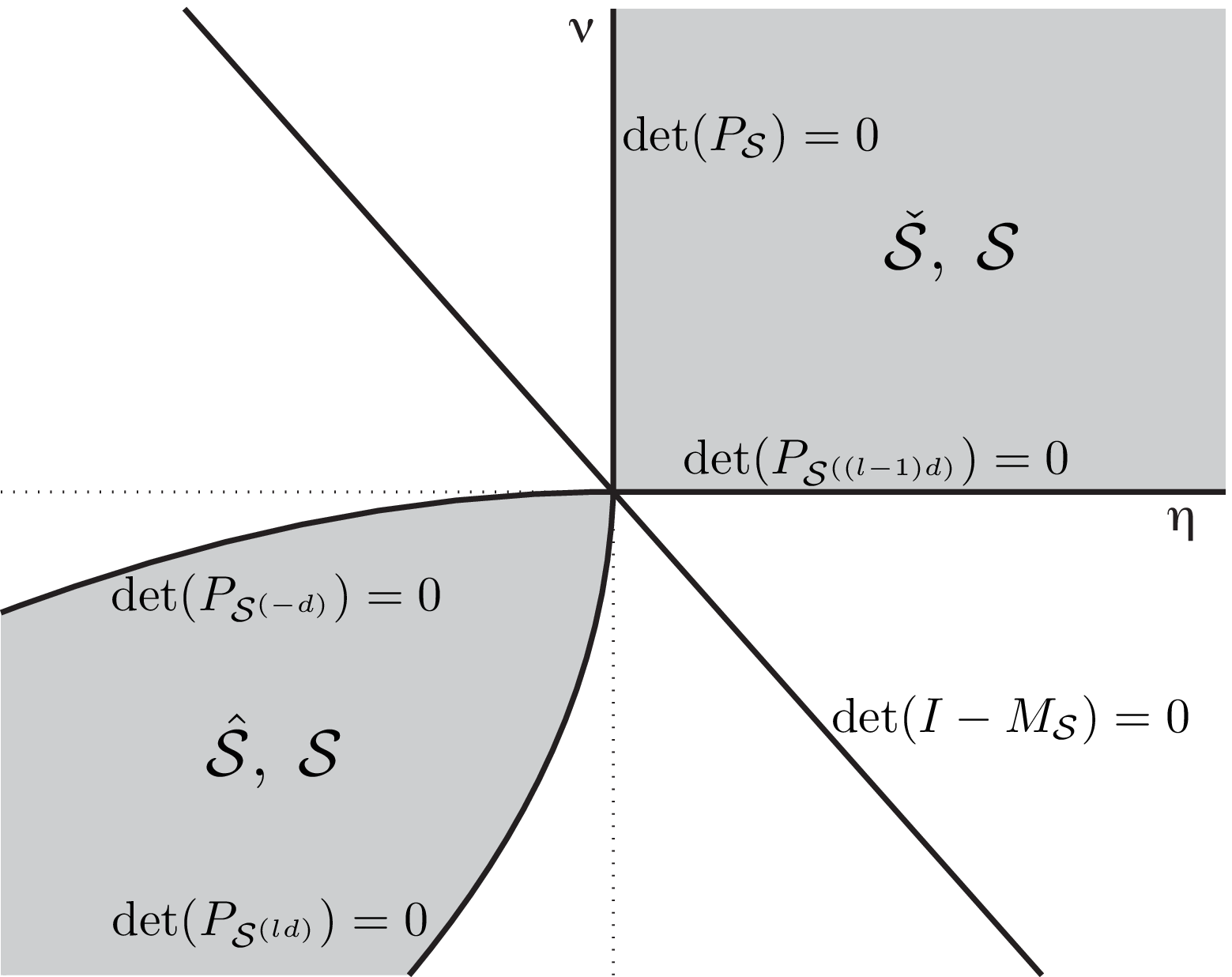}
\end{center}
\caption{
A schematic of the generic bifurcation set in a neighborhood of a non-\tsp.
}
\label{fig:shrinkSchematic}
\end{figure}

Finally we turn to the proof of Th.~\ref{th:shrink}.

\begin{proof}[Proof of Th.~\ref{th:shrink}]
Write
\begin{eqnarray}
\det(I-M_{\check{\cS}}(\eta,\nu)) & = & k_3 + O(1) \label{eq:detCMTS} \;, \\
\det(I-M_{\hat{\cS}}(\eta,\nu)) & = & k_4 + O(1) \label{eq:detHMTS} \;,
\end{eqnarray}
where $k_3, k_4 \ne 0$ by assumption.
Let
\begin{equation}
x(\eta,\nu) = \mu (I-M_\cS(\eta,\nu))^{-1} P_\cS(\eta,\nu) b \;,
\end{equation}
whenever $\nu \ne h(\eta)$, (\ref{eq:shrinkH}).
For small $\nu \ne 0$, if $\eta = 0$ then $x$ is defined
and coincides with $\check{x}$ because $\det(P_\cS) = 0$.
Therefore $x_{id}(0,\nu) = \check{x}_{id}(0,\nu)$, for all $i$.
In particular,
\begin{equation}
s_{id}(0,\nu) = t_{id} + O(\nu) \;,
\label{eq:x1id1}
\end{equation}
so that by (\ref{eq:nuAxis}) and (\ref{eq:etaAxis})
\begin{eqnarray}
s(0,\nu) & = & 0 \nonumber \;, \\
s_{ld}(0,\nu) & = & \nu + O(\nu^2) \label{eq:x1ld1} \;.
\end{eqnarray}
Similarly for small $\eta \ne 0$, if $\nu = 0$ then $x$ is defined
and $\det(P_{\cS^{((l-1)d)}}) = 0$, thus $x$ solves the \ncss~of
$\cS^{\overline{(l-1)d}} = \check{\cS}^{(d)}$.
Therefore $x_{id}(\eta,0) = \check{x}_{(i+1)d}(\eta,0)$, for all $i$.
In particular,
\begin{equation}
s_{id}(\eta,0) = t_{(i+1)d} + O(\eta) \;,
\label{eq:x1id2}
\end{equation}
so that
\begin{eqnarray}
s_{-d}(\eta,0) & = & \eta + O(\eta^2) \nonumber \;, \\
s_{(l-1)d}(\eta,0) & = & 0 \nonumber \;.
\end{eqnarray}
We now compute lowest order terms in the Taylor series
of $\hat{s}_{ld}(\eta,\nu)$.
An application of Lem.~\ref{le:detPair} to the \sew~$\cS^{(ld)}$,
using also Lem.~\ref{le:cyclicDetM}, produces
\begin{equation}
\det(I-M_\cS(\eta,\nu)) s_{ld}(\eta,\nu) =
\det(I-M_{\hat{\cS}}(\eta,\nu)) \hat{s}_{ld}(\eta,\nu) \nonumber \;.
\end{equation}
Using (\ref{eq:detMTS}), (\ref{eq:detHMTS}), (\ref{eq:x1ld1}) and (\ref{eq:x1id2})
we obtain
\begin{eqnarray}
\hat{s}_{ld}(0,\nu) & = & \frac{k_2}{k_4} \nu^2 + O(\nu^3) \nonumber \;, \\
\hat{s}_{ld}(\eta,0) & = &
\frac{k_1 t_{(l+1)d}}{k_4} \eta + O(\eta^2) \label{eq:x1ld3} \;.
\end{eqnarray}
By the implicit function theorem,
there exists a unique $C^k$ function $g_1 : \mathbb{R} \to \mathbb{R}$
such that for small $\nu$,
$\hat{s}_{ld}(g_1(\nu),\nu) = 0$ and
\begin{equation}
g_1(\nu) = -\frac{k_2}{k_1 t_{(l+1)d}} \nu^2 + O(\nu^3) \;.
\label{eq:g1}
\end{equation}
In a similar fashion, by applying Lem.~\ref{le:detPair} to the
\sew~$\cS^{(-d)}$ and noting $\cS^{(-d)\overline{0}} = \hat{\cS}$,
we obtain
\begin{eqnarray}
\hat{s}(0,\nu) & = & \frac{k_2 t_{-d}}{k_4} \nu + O(\nu^2) \label{eq:x104} \;, \\
\hat{s}(\eta,0) & = & \frac{k_1}{k_4} \eta^2 + O(\eta^3) \nonumber \;,
\end{eqnarray}
so that the implicit function theorem gives the function
\begin{equation}
g_2(\eta) = -\frac{k_1}{k_2 t_{-d}} \eta^2 + O(\eta^3) \;.
\label{eq:g2}
\end{equation}

We now prove $k_1 k_2 > 0$.
Let $\Sigma_i \subset \mathbb{R}^2$ denote the intersection of the
interior of the $i^{\rm th}$ quadrant with a sufficiently small
neighborhood of the origin ($i = 1,\ldots,4$).
Suppose for a contradiction, $k_1 k_2 < 0$.
Then, by (\ref{eq:detMTS}), $(I-M_\cS)$ is nonsingular throughout $\Sigma_2$
and hence $s$ is continuous throughout $\Sigma_2$.
Notice $s = 0$ only when $\eta = 0$, thus the sign of $s$
is constant in $\Sigma_2$.
When $\nu = 0$, by admissibility and using (\ref{eq:x1id2})
we have $s(\eta,0) = t_d + O(1) < 0$,
thus $s$ is negative throughout $\Sigma_2$.
In particular, $s$ is negative when 
$\nu = \frac{k_1 \eta}{k_2}$ for small $\eta < 0$.
By applying Lem.~\ref{le:detPair} to $\cS$, we obtain
\begin{equation}
\det(I-M_\cS(\eta,\nu)) s(\eta,\nu) =
\det(I-M_{\check{\cS}}(\eta,\nu)) \check{s}(\eta,\nu) \;.
\label{eq:dr3}
\end{equation}
Substituting $\nu = \frac{k_1 \eta}{k_2}$ and using
(\ref{eq:nuAxis}), (\ref{eq:detMTS}) and (\ref{eq:detCMTS}) gives
\begin{eqnarray}
(2 k_1 \eta + O(\eta^2)) s(\eta,\frac{k_1 \eta}{k_2}) & = &
(k_3 + O(\eta))(\eta + O(\eta^2)) \nonumber \;, \\
\Rightarrow~~~2 k_1 s(\eta,\frac{k_1 \eta}{k_2}) & = & k_3 + O(\eta) \nonumber \;, \\
\Rightarrow~~~k_1 k_3 & < & 0 \label{eq:contradict1} \;.
\end{eqnarray}
Via a similar argument with $s_{(l-1)d}$,
we find $k_2 k_3 < 0$.
This inequality provides a contradiction with (\ref{eq:contradict1}), thus $k_1 k_2 > 0$.
Hence $s$ and $s_{(l-1)d}$ are in fact continuous
and negative throughout $\Sigma_1$ and $\Sigma_3$, thus $k_1 k_3 < 0$.
Via similar arguments we find $k_1 k_4 > 0$.
Consequently we have
\begin{equation}
{\rm sgn}(k_1) = {\rm sgn}(k_2) = -{\rm sgn}(k_3) = {\rm sgn}(k_4)
\label{eq:allK}
\end{equation}
Furthermore, from (\ref{eq:g1}) and (\ref{eq:g2}) we have
\begin{equation}
g_1''(0), g_2''(0) < 0 \;,
\end{equation}
since $t_{(l+1)d}, t_{-d} > 0$.

Near $(\eta,\nu) = (0,0)$,
since $\{ p_i \}$ is admissible,
$\check{\cS}$-cycles are admissible if and only if
$\check{s}, \check{s}_{ld} \ge 0$,
thus only in $\Psi_1$ by (\ref{eq:nuAxis}) and (\ref{eq:etaAxis}).
Similarly $\hat{\cS}$-cycles are admissible only if
$\hat{s}, \hat{s}_{ld} \le 0$.
By (\ref{eq:x104}),
$\frac{\partial \hat{s}}{\partial \nu}(0,0) = \frac{k_2 t_{-d}}{k_4}$
is positive by (\ref{eq:allK}) and admissibility.
Thus $\hat{s}(\eta,\nu) \le 0$ when $\nu \le h_2(\eta)$.
Similarly, by (\ref{eq:x1ld3}),
$\hat{s}_{ld}(\eta,\nu) \le 0$ when $\eta \le h_1(\nu)$.
Therefore $\hat{\cS}$-cycles  are admissible in $\Psi_2$.
The result for $\cS$-cycles follows by
looking at the signs of each $s_i$ on
the boundaries of $\Psi_1$ and $\Psi_2$.
\hfill
\end{proof}

\section{Discussion}
\label{sec:CONC}

Dynamics local to a nondegenerate border-collision bifurcation of a fixed point
of a piecewise-smooth, continuous map are described by a piecewise-affine
approximation of the form (\ref{eq:pwaMap}).
The border-collision bifurcation occurs when the parameter, $\mu$, is zero.
As $\mu$ is varied from zero, invariant sets such as periodic solutions
and topological circles
may emanate from the bifurcation.
The focus of this paper has been to fix $\mu$ at some nonzero value,
assume (\ref{eq:pwaMap}) varies with other independent parameters,
and investigate the nature of regions in parameter space
where periodic solutions exist.
In two-dimensional parameter space these resonance tongues
commonly display a lens-chain structure.
This paper has performed a rigorous unfolding about
points for which resonance tongues have zero width - shrinking points.

Curves corresponding to four different border-collision fold bifurcations
form resonance tongue boundaries near non-\tsp s.
If $\{ x_i \}$ denotes the corresponding $\cS$-cycle when it is well-defined,
each boundary corresponds to the collision of one of the points
$x_0$, $x_{-d}$, $x_{(l-1)d}$ and $x_{ld}$ with the \sw.
By Lem.~\ref{le:bc}, at each boundary the corresponding
border-collision matrix, (one of
$P_\cS$, $P_{\cS^{(-d)}}$, $P_{\cS^{((l-1)d)}}$ and $P_{\cS^{(ld)}}$) is singular. 

At a shrinking point there exists an invariant polygon, $\mathcal{P}$.
If $N = 2$, or the shrinking point is terminating,
$\mathcal{P}$ is planar, but in general $\mathcal{P}$ is nonplanar.
$\mathcal{P}$ is comprised of uncountably many
periodic solutions that have a given rotational symbol sequence, $\cS$.
Under variation of parameters, $\mathcal{P}$ may persist as an
invariant topological circle.
In this case there exists curves along which
one intersection point of the circle with the \sw~maps to the other
intersection point in a fixed, finite number of map iterations.
We hypothesize that shrinking points generically occur densely along such
curves, see \cite{SiMe08b}.

The two coexisting periodic solutions in a lens shaped resonance region
have associated rotational symbol sequences
with identical values for $m$ and $n$ but with values of
$l$ that differ by one.
By adding or subtracting one from the value of $l$ of these
symbol sequences, one is able to obtain the
symbol sequences of the periodic solutions in an adjoining lens.
In short, throughout a lens-chain, the rotational number $m/n$ is constant
and $l$ differs by one between lenses.
Numerically we have observed that as $n \to \infty$,
the overall width of the lens-chain tends to zero
and the number of individual lenses increases.
We speculate that in the limit,
a resonance tongue for a quasiperiodic solution of fixed frequency is
a curve along which the fraction of points of the solution that lie in
each half plane changes continuously.

Shrinking points require solutions to intersect the \sw~at two distinct points.
For this reason shrinking points do not occur for
(\ref{eq:pwaMap}) in one dimension because here the \sw~is a single point.
The one-dimensional, piecewise-linear, circle map studied in \cite{YaHa87}
is able to exhibit lens-chain structures because it has
two switching manifolds.

Resonance tongues corresponding to periodic solutions with
non-rotational symbol sequences need not exhibit a lens chain structure \cite{SiMe08b}.
But we believe that the collection of all symbol sequences that
may generically exhibit a lens-chain structure is some class
of which rotational symbol sequences are only the simplest type.
For instance a period-$n$ cycle of (\ref{eq:pwaMap})
may undergo a Neimark-Sacker-like
bifurcation when a complex conjugate pair of associated multipliers
crosses the unit circle.
If this crossing occurs at $\lambda = {\rm e}^{\frac{2 \pi i p}{q}}$,
where $p,q \in \mathbb{Z}$ are coprime,
then a period-$nq$ orbit may arise in
a manner akin to a \tsp.
Numerically we have observed that such an orbit may exhibit a lens-chain shaped
resonance tongue, even though its corresponding symbol sequence 
is non-rotational.
This scenario corresponds to an $n^{\rm th}$ iterate map
of the same form as (\ref{eq:pwaMap})
exhibiting lens-chains.

The analysis presented in this paper applies to periodic solutions
near the border-collision bifurcation of a fixed point.
In general a periodic solution may result from global dynamics and
enter regions of phase space bounded by many distinct \sw s.
A study of the codimension-two points
resulting from such a periodic solution simultaneously colliding
with different \sw s is left for future investigations.

\appendix

\section{Proofs of Lems.~\ref{le:bc} and \ref{le:sn}}
\label{sec:PROOFSa}

\begin{proof}[Proof of Lem.~\ref{le:bc}]
We first show that $(I-A_\sL)$ and $(I-A_\sR)$ cannot both be singular.
Suppose otherwise.
Then $\varrho^{\sf T}(I-A_\sL) = \det(I-A_\sL) e_1^{\sf T} = 0$
and $\varrho^{\sf T}(I-A_\sR) = \det(I-A_\sR) e_1^{\sf T} = 0$.
Thus $\varrho^{\sf T} A_\sL = \varrho^{\sf T} A_\sR = \varrho^{\sf T}$.
Therefore $\varrho^{\sf T} M_\cS = \varrho^{\sf T}$,
i.e.~$M_\cS$ has an eigenvalue, $\lambda = 1$,
which contradicts the assumption that $(I-M_\cS)$ is nonsingular.
Thus at least one of $(I-A_\sL)$ and $(I-A_\sR)$ is nonsingular.
Suppose w.l.o.g.~that $(I-A_\sL)$ is nonsingular.

For any $\hat{b} \in \mathbb{R}^N$,
let $k = \frac{\varrho^{\sf T}\hat{b}}{\varrho^{\sf T}b}$,
let $c = \hat{b} - k b$
and let $y = \mu (I-A_\sL)^{-1} c$.
Notice $\varrho^{\sf T} c = 0$,
thus the first component of $y$ is zero,
hence $y = \mu c + A_\sL y = \mu c + A_\sR y$.
Thus $y = \mu P_\cS c + M_\cS y$ and therefore
$y = \mu (I-M_\cS)^{-1} P_\cS c$.

Let $\hat{x}_0 = k x_0 + y$.
Multiplication on the left by $e_1^{\sf T}$ yields $\hat{s}_0 = k s_0$.
Using above results we have $\hat{x}_0 = \mu (I-M_\cS)^{-1} P_\cS \hat{b}$.
Thus if $\hat{b} \in {\rm null}(P_\cS)$, then $\hat{x}_0 = 0$.
Alternatively if $\hat{b}$ is chosen such that $\varrho^{\sf T} \hat{b} = 0$,
then $\hat{b} = c$ and therefore $\hat{x}_0 = y$.
Hence if $\hat{b} \in {\rm null}(P_\cS)$ and $\varrho^{\sf T} \hat{b} = 0$,
then $y = 0$ and therefore $c = \hat{b} = 0$.

Now suppose $P_\cS$ is singular.
Then there exists $\hat{b} \in {\rm null}(P_\cS)$ with $\hat{b} \ne 0$
and by the previous result, $\varrho^{\sf T} \hat{b} \ne 0$.
Thus we have $\hat{s}_0 = 0$ and $k \ne 0$, therefore $s_0 = 0$,
i.e.~$x_0$ lies on the \sw.

Conversely suppose $s_0 = 0$.
Then $\hat{s}_0 = \mu e_1^{\sf T} (I-M_\cS)^{-1} P_\cS \hat{b} = 0$
for any $\hat{b} \in \mathbb{R}^N$.
Hence $P_\cS$ is singular.
\end{proof}

\begin{proof}[Proof of Lem.~\ref{le:sn}]
Clearly if $(I-M_\cS)$ is nonsingular,
(\ref{eq:nCycleSolutionSystem}) has the unique solution
(\ref{eq:nCycleSolution}).
To prove the converse, suppose now that $(I-M_\cS)$ is singular.
We will show that (\ref{eq:nCycleSolutionSystem}) has no solution.
Let $K = {\rm Span}(\varrho)^{\perp}$
(the collection of all vectors orthogonal to $\varrho$).
Choose any $c \in K$.
We will show that the linear system,
$(I-A_\sL) x = \mu c$,
always has a solution, $x$, with
$s = e_1^{\sf T} x = 0$.
This is clear if $(I-A_\sL)$ is nonsingular for then
$x = \mu (I-A_\sL)^{-1} c$
and $s = \frac{\mu \varrho^{\sf T} c}{\det(I-A_\sL)} = 0$.
Instead, suppose $(I-A_\sL)$ is singular.
We now characterize the nullspace of $(I-A_\sL)$.

We first show that if $v_0 \in {\rm null}(I-A_\sL)$ and $v_0 \ne 0$,
then $e_1^{\sf T} v_0 \ne 0$.
Suppose for a contradiction $e_1^{\sf T} v_0 = 0$.
Let $B_{ij}$ denote the $(N-1) \times (N-1)$ matrix
formed by removing the $i^{\rm th}$ row
and $j^{\rm th}$ column from $(I-A_\sL)$.
Let $\check{v} \in \mathbb{R}^{N-1}$ denote the
last $(N-1)$ elements of $v_0$.
Then $\check{v} \ne 0$ and $B_{i1} \check{v} = 0$ for each $i$,
thus $\det(B_{i1}) = 0$ for each $i$.
In other words the first column of the cofactor matrix of $(I-A_\sL)$ is zero,
equivalently $\varrho^{\sf T} = 0$, which contradicts
the assumption: $\varrho^{\sf T} b \ne 0$.
Hence $e_1^{\sf T} v_0 \ne 0$.

Let $u,v \in {\rm null}(I-A_\sL)$.
Then the linear combination,
$u e_1^{\sf T} v - v e_1^{\sf T} u$,
is an element of the null space of $(I-A_\sL)$
and the first element of this vector is zero.
By the previous argument this vector must be the zero vector,
thus $u$ and $v$ are linearly dependent.
Hence ${\rm null}(I-A_\sL)$ is one-dimensional, therefore
${\rm null}(I-A_\sL) = {\rm Span}(v_0)$.

We have $\varrho^{\sf T} (I-A_\sL) = \det(I-A_\sL) e_1^{\sf T} = 0$,
thus ${\rm null}((I-A_\sL)^{\sf T})
= {\rm Span}(\varrho) = K^\perp$.
By the fundamental theorem of linear algebra,
${\rm range}(I-A_\sL) = {\rm null}((I-A_\sL)^{\sf T})^\perp = K$.
Notice $\mu c \in K$,
therefore there exists $w \in \mathbb{R}^N$ such that
$(I-A_\sL) w = \mu c$.
Let $x = w - \frac{e_1^{\sf T} w}
{e_1^{\sf T} v_0} v_0$.
Then $(I-A_\sL) x = \mu c$ and $s = 0$.
Summarizing, regardless of whether $(I-A_\sL)$ is singular
or nonsingular, we have found a vector, $x$, with zero first component,
such that $(I-A_\sL) x = \mu c$.

The vector $x$ satisfies
$x = \mu c + A_\sL x
= \mu c + A_\sR x$, therefore
$x = \mu P_\cS c + M_\cS x$
and hence $\mu P_\cS c \in {\rm range}(I-M_\cS)$.
But $c$ is arbitrary,
hence $P_\cS K \subset {\rm range}(I-M_\cS)$.
$K$ is $(N-1)$-dimensional thus $P_\cS K = {\rm range}(I-M_\cS)$.
By assumption, $b \ne K$, thus 
(\ref{eq:nCycleSolutionSystem}) has no solution.
\end{proof}

\section{Proof of Lem.~\ref{le:shrinkPoly}}
\label{sec:PROOFSb}

\begin{proof} We begin with the case of non-\tsp s.

\begin{enumerate}[label=\alph{*}),ref=\alpha{*}]
\item First, $P_{\check{\cS}} = P_\cS$ is singular by assumption,
thus by Lem.~\ref{le:bc}, $t = 0$.
Second, $P_{\check{\cS}^{(ld)}} = P_{\cS^{\overline{0}(ld)}}
= P_{\cS^{((l-1)d)\overline{0}}} = P_{\cS^{((l-1)d)}}$
is singular by assumption,
where we have utilized Lem.~\ref{le:oss}\ref{it:switchS}
for the second equality
and (\ref{eq:PindepS0}) for the last equality.
Thus, by Lem.~\ref{le:bc}, $t_{ld} = 0$.
\item Suppose for a contradiction, $t_d = 0$.
Since also $t_{ld} = 0$ from (a),
by a double application of Lem.~\ref{le:solvesAlso},
$p$ solves the \ncss~of
$\check{\cS}^{\overline{d}\,\overline{ld}}$.
From the definition, (\ref{eq:rotSSdef}),
it is seen that
$\check{\cS}^{\overline{d}\,\overline{ld}} =  \check{\cS}^{(-d)}$.
Therefore, $p_d$ solves the \ncss~of $\check{\cS}$, hence $p = p_d$.
The point $p$ is admissible by assumption,
therefore $p = p_{kd}$ for any integer $k$.
Putting $k = m$ we obtain $p = p_1$,
hence $p$ is a fixed point of (\ref{eq:pwaMap})
and lies on the \sw.
However (\ref{eq:pwaMap}) cannot have a fixed point
on the \sw~because by assumption $\mu \ne 0$
and $\varrho^{\sf T} b \ne 0$,
so we have a contradiction.
Thus $t_d \ne 0$.
The remaining three points may be proven nonzero
in a similar fashion.
The given signs follow immediately from admissibility.
\item Suppose for a contradiction,
$\{ p_i \}$ is of period $\tilde{n} < n$.
Clearly $\tilde{n}$ divides $n$.
Then $p_{i \tilde{n} + k} = p_k$
for any integers $i$ and $k$.
But from part (b), $p_d = p_{i \tilde{n} + d}$
lies left of the \sw.
Thus $\check{\cS}_{i \tilde{n} + d} = \sL$ for
$i = 0,\ldots,\frac{n}{\tilde{n}}-1$,
since the orbit is admissible.
But ${\rm gcd}(d,\tilde{n}) = 1$,
hence in view of (\ref{eq:rotSSdef}),
we must have $l \ge 2 + (\frac{n}{\tilde{n}}-1) \tilde{n}
= n - \tilde{n} + 2 \ge \frac{n}{2} + 2$.
Similarly, also from part (b),
$p_{-d} = p_{i \tilde{n} - d}$
lies right of the \sw~and
hence $\check{\cS}_{i \tilde{n} - d} = \sR$
for $i = 0,\ldots,\frac{n}{\tilde{n}}-1$.
Here it follows $l < \tilde{n} \le \frac{n}{2}$
which provides a contradiction.
\item From part (a), $t = 0$, thus
by Lem.~\ref{le:solvesAlso}, $p$ solves the
\ncss~of $\check{\cS}^{\overline{0}} = \cS$.
Similarly, since $t_{ld} = 0$,
$p$ solves the \ncss~of $\check{\cS}^{\overline{ld}} = \cS^{(-d)}$
by (\ref{eq:rotSSdef}) and therefore
$p_d$ solves the \ncss~of $\cS$.
Let $w(\tau) = \tau p + (1-\tau) p_d$.
Since $\{ p_i \}$ is assumed to be admissible,
$\{ w_i(\tau) \}$ is an admissible $\cS$-cycle
whenever $0 \le \tau < 1$.

From part (c), the $n$ points $\{ p_i \}$ are distinct.
We now show that the union of all the $\{ w_i(\tau) \}$, call it $\mathcal{P}$,
has no self-intersections.
Suppose for a contradiction that
$w_{i_1}(\tau_1) = w_{i_2}(\tau_2)$
for some $i_1 \ne i_2$ and $0 < \tau_1, \tau_2 < 1$.
Let $x = w_0(\tau_1)$,
then $\{ x_i \}$ is an admissible $\cS$-cycle
and an admissible $\cS^{(i_2-i_1)}$-cycle.
Since $\tau_1 \ne 0,1$, by part (b),
$s_0 < 0$ and $s_{-d} > 0$.
Therefore $\cS_{i_2-i_1} = \sL$ and
$\cS_{i_2-i_1-d} = \sR$.
By Lem.~\ref{le:oss}\ref{it:permDiffS}, $i_2 - i_1 = 0$, which is a contradiction.
Hence $\mathcal{P}$ has no self-intersections and is
therefore an invariant, nonplanar $n$-gon.

To relate (\ref{eq:pwaMap}) to a map on the unit circle
we describe a bijection between $\mathbb{S}^1$ and $\mathcal{P}$.
The angular coordinate, $\theta \in [0,2\pi)$,
uniquely describes a point on $\mathbb{S}^1$.
Let $j(\theta) = \lfloor \frac{n \theta}{2\pi} \rfloor$
and $\tau(\theta) = \frac{n \theta}{2\pi} - j(\theta)$.
Let $z : \mathbb{S}^1 \to \mathcal{P}$
be defined by $z(\theta) = w_{j(\theta)d}(\tau(\theta))$.
It is easily verified the function $z$ is a bijection.
The induced map on $\mathbb{S}^1$ is
$g(\theta) = (z^{-1} \circ f \circ z)(\theta)
= \theta + \frac{2\pi m}{n}$,
i.e.~rigid rotation with rotation number $m/n$.
\end{enumerate}
\end{proof}

\begin{proof} We now consider the case of  \tsp s.

Let $v = y + {\rm i} z$ be an eigenvector corresponding to the multiplier
$\lambda = {\rm e}^{\pm \frac{2 \pi {\rm i} m}{n}}$ for the matrix $A_\sL$.
Let $D = \left[ \begin{array}{cc}
\cos(\frac{2 \pi m}{n}) & \sin(\frac{2 \pi m}{n}) \\
-\sin(\frac{2 \pi m}{n}) & \cos(\frac{2 \pi m}{n})
\end{array} \right]$.
Then $D^j = \left[ \begin{array}{cc}
\cos(\frac{2 \pi j m}{n}) & \sin(\frac{2 \pi j m}{n}) \\
-\sin(\frac{2 \pi j m}{n}) & \cos(\frac{2 \pi j m}{n})
\end{array} \right]$
and $A_\sL^j [y~z] = [y~z] D^j$.
Notice also $D^n = I$.
The first component of $v$ must be nonzero because otherwise
$A_\sR [y~z] = [y~z] D$ and hence
$M_{\check{\cS}} [y~z] = [y~z] D^n = [y~z]$,
violating the assumption that $(I-M_{\check{\cS}})$ is nonsingular.
Thus $E_c$ intersects the \sw.
Furthermore, $E_c$ is two-dimensional, because otherwise
there exists another eigenvector for
$\lambda = {\rm e}^{\pm \frac{2 \pi {\rm i} m}{n}}$,
call it $\hat{v}$,
linearly independent to $v$,
and the linear combination,
$v e_1^{\sf T} \hat{v} - \hat{v} e_1^{\sf T} v$,
is an eigenvector with zero first component contradicting the
previous argument.

Let $x = x_0 = \alpha y + \beta z + x^{*(\sL)}$
for some as yet undetermined scalars $\alpha$ and $\beta$.
Let $x_i$ denote the $i^{\rm th}$ iterate of $x$
via the \sew~$\hat{\cS} = \sL^n$.
Then $x_i = [y~z] D^i
[\alpha~\beta]^{\sf T}
+ x^{*(\sL)}$
and hence $\{ x_i \}$ is a period-$n$ cycle.
We wish to choose $\alpha$ and $\beta$ such that
$s_0 = e_1^{\sf T} [y~z] 
[\alpha~\beta]^{\sf T}
+ s^{*(\sL)} = 0$ and
$s_{-d} = e_1^{\sf T} [y~z] D^{-d}
[\alpha~\beta]^{\sf T}
+ s^{*(\sL)} = 0$.
Combining these two equations yields the linear system,
$X
[\alpha~\beta]^{\sf T}
= -s^{*(\sL)}
[1~1]^{\sf T}
$
where
$X = \left[ \begin{array}{cc}
e_1^{\sf T} y & e_1^{\sf T} z \\
e_1^{\sf T} y \cos(\frac{2 \pi}{n}) + e_1^{\sf T} z \sin(\frac{2 \pi}{n}) &
-e_1^{\sf T} y \sin(\frac{2 \pi}{n}) + e_1^{\sf T} z \cos(\frac{2 \pi}{n})
\end{array} \right]$
has determinant,
$\det(X) = -\left( (e_1^{\sf T} y)^2 + (e_1^{\sf T} z)^2 \right) \sin(\frac{2 \pi}{n})
\ne 0$.
Thus we let 
$
[\alpha~\beta]^{\sf T}
= -s^{*(\sL)} X^{-1} 
[1~1]^{\sf T}
$.
It follows that
\begin{equation}
s_{id} = s^{*(\sL)} \left(
1 - \frac{\cos(\frac{2 \pi (i + \frac{1}{2})}{n})}{\cos(\frac{\pi}{n})} \right) \;.
\label{eq:sid}
\end{equation}

Thus the period-$n$ cycle, $\{ x_i \}$, is admissible
and solves the \ncss~of $\check{\cS}$.
Therefore $x = p$.
Thus $\{ p_i \}$ has period $n$ which verifies part (c) of the lemma.
Also $t_i = s_i$, for each $i$, thus by (\ref{eq:sid})
we have parts (a) and (b).
To verify part (d), note that
as in the non-terminating case,
$p$ and $p_d$ are admissible solutions to the \ncss~of $\cS$.
Thus $w(\tau) = \tau p + (1 - \tau) p_d$ is an admissible solution
to the \ncss~of $\cS$ for all $\tau \in [0,1)$.
The union of all such cycles
is an invariant, $\mathcal{P}$,
which lies on $E_c$, so is planar.
The points, $p_i$, lie on an ellipse and ordered so that $\mathcal{P}$ has no
self-intersections and is an $n$-gon.
As before, a bijection between $\mathbb{S}^1$ and $\mathcal{P}$ may be constructed
to show that the restriction of (\ref{eq:pwaMap}) to $\mathcal{P}$
is homeomorphic to rigid rotation with rotation number $m/n$.
\end{proof}


\end{document}